\newtheorem{theorem}{Theorem}
\newtheorem{lemma}[theorem]{Lemma}
\newtheorem{definition}{Definition}
\newtheorem{remark}[theorem]{Remark}
\newtheorem{assumption}[theorem]{Assumption}
\newcommand{\BB}{\mathbb{B}}
\newcommand{\w}{\mathbf w}
\newcommand{\x}{\mathbf x}
\newcommand{\y}{\mathbf y}
\newcommand{\z}{\mathbf z}
\newcommand{\argmin}{\mathop{\rm argmin}}
\newcommand{\prox}{\textnormal{prox}}
\newcommand{\br}{\mathbb{R}}
\newcommand{\ba}{\begin{array}}
\newcommand{\ea}{\end{array}}
\title{\LARGE \bf New Proximal Newton-Type Methods for Convex Optimization}
\author{Ilan Adler, Zhiyue T. Hu and Tianyi Lin\thanks{Ilan Adler and Tianyi Lin are with Department of Industrial Engineering and Operations Research, UC Berkeley. Zhiyue T. Hu is with Division of Biostatistics, UC Berkeley. Email: {\tt\small \{ilan, zyhu95, darren{\_}lin\}@berkeley.edu}}
}
\begin{document}

\maketitle
\thispagestyle{empty}
\pagestyle{empty}

\begin{abstract}
In this paper, we propose new proximal Newton-type methods for convex optimization problems in composite form. The applications include model predictive control (MPC) and embedded MPC. Our new methods are computationally attractive since they do not require evaluating the Hessian at each iteration while keeping fast convergence rate. More specifically, we prove the global convergence is guaranteed and the superlinear convergence is achieved in the vicinity of an optimal solution. We also develop several practical variants by incorporating quasi-Newton and inexact subproblem solving schemes and provide theoretical guarantee for them under certain conditions. Experimental results on real-world datasets demonstrate the effectiveness and efficiency of new methods.
\end{abstract}

\section{Introduction}\label{sec:introduction}
We consider the generic convex composite optimization model: 
\begin{equation}\label{prob:main}
\min_{\x \in \br^d} \ F(\x) \ = \ f(\x) + r(\x),
\end{equation}
where $f$ is a convex and \textit{twice differentiable} function and $r$ is an extended real-valued closed convex function. Problem~\eqref{prob:main} have found various applications ranging from model predictive control (MPC)~\cite{Patrinos-2011-Global, Nedelcu-2012-Iteration, Patrinos-2013-Accelerated} to machine learning and statistics~\cite{Boyd-2004-Convex, Hastie-2009-Elements}. For example, when $f$ is quadratic and $r$ is an indicator of a polyhedral set, problem~\eqref{prob:main} becomes a quadratic program (QP) and covers numerous applications in embedded MPC. When $f = (1/n)\sum_{i=1}^n \phi(\cdot, \w_i, \y_i)$ for $n$ data samples $\{(\w_i, y_i)\}_{i=1}^n$ and $r = \|\cdot\|_1$, problem~\eqref{prob:main} reduces an empirical risk minimization in sparse learning, e.g., $\ell_1$-regularized logistic and Poisson regression; see~\cite{Hastie-2015-Statistical}. 

During the past decades, many optimization algorithms have been developed for solving problem~\eqref{prob:main} with theoretical guarantees; see proximal splitting method~\cite{Combettes-2011-Proximal} and its acceleration~\cite{Tseng-2008-Accelerated, Beck-2009-Fast, Nesterov-2013-Gradient}. These methods also perform admirably in practice and have been implemented in the TFCOS package \cite{Becker-2011-Templates}. Comparing to their first-order counterpart, second-order methods for convex optimization enjoy superior convergence in both theory and practice. For example, the proximal Newton-type methods~\cite{Becker-2012-Quasi, Patrinos-2013-Proximal, Lee-2014-Proximal} achieve at least a superlinear convergence rate while first-order method only achieves much slower $O(1/k^2)$ rate which is known to be unimprovable~\cite{Nesterov-2018-Lectures}. Furthermore, second-order methods are more robust and depend less on the problem structure than first-order methods, which often suffer from the tuning of step size on ill-conditioned problems. These advantages are due to the curvature exploration of second-order methods and are further demonstrated by the admirable performance on several specific problems, include GLMNET~\cite{Friedman-2007-Pathwise}, QUIC~\cite{Hsieh-2014-Quic} and PNOPT~\cite{Lee-2014-Proximal}. However, each step of Newton-type methods requires solving a composite QP defined by the Hessian matrix, which poses a tremendous numerical challenge for big data applications. Despite some recent stochastic Newton-type methods based on subsampling and sketching techniques~\cite{Pilanci-2017-Newton, Roosta-2019-Sub}, the issue on curse of dimensionality remains. Thus, it is natural to ask:
\begin{quote}
\textit{Can we balance the curvature exploration and per-iteration cost in second-order methods?}
\end{quote}
In this paper, we present an affirmative answer by developing new proximal Newton-type methods. These methods are inspired by Shamanskii's seminal work~\cite{Shamanskii-1967-Modification}. They are computationally attractive since they do not require evaluating the Hessian at each iteration while keeping fast convergence rate. The proof technique is new and of independent interest. 

\textbf{Related works:} Problem~\eqref{prob:main} is equivalent to finding a simple root of a multivariate nonlinear equation $\nabla f(\x) = 0$ when $r = 0$, where Newton-type methods serve as the state-of-the-art approach~\cite{Kelley-1995-Iterative, Kelley-2003-Solving}. Recent works focuses on the development of Newton-type methods with a superquadratic rate of convergence~\cite{Homeier-2004-Modified, Frontini-2004-Third, Cordero-2007-Variants, Noor-2007-Third, Darvishi-2007-Third, Hueso-2009-Third, Waseem-2016-Efficient}. Despite the appealing local convergence property, these methods require the Lipschitz continuity of high-order derivatives of $f$ and suffer from the expensive per-iteration computational cost of forming and factorizing a new Hessian matrix at least once at each iteration. On the other hand, when $r=0$, our method with unit stepsize and exact Hessian information reduces to Shamanskii's method~\cite{Shamanskii-1967-Modification} whose global convergence has been studied in~\cite{Lampariello-2001-Global}. Comparing to Shamanskii's method, our algorithmic scheme is more flexible and the convergence results are more general and comprehensive; see Section~\ref{sec:algorithm} and~\ref{sec:convergence} for details.

\textbf{Organization:} The rest of the paper is organized as follows. In Section~\ref{sec:prelim}, we introduce notations and assumptions. In Section~\ref{sec:algorithm}, we present new methods, namely generic and inexact proximal Shamanskii methods, for solving problem~\eqref{prob:main}. We present the convergence guarantee and empirical results on two real-world datasets in Section~\ref{sec:convergence} and~\ref{sec:results}. We conclude in Section~\ref{sec:conclusions} and defer the proofs to the appendix. 

\section{Preliminaries}\label{sec:prelim}
\textbf{Notation:} We denote vectors and matrices by bold lower and regular upper case letters. $\left\|\cdot\right\|$ denotes the $\ell_2$-norm and the matrix spectral norm. $\top$ denotes the transpose of a vector. $\BB_\delta(\x) = \{\z \in \br^d \mid \|\z - \x\| \leq \delta\}$ refers to $\delta$-neighborhood of $\x$. Let $A$ and $B$ be two symmetric matrices, $A \succeq B$ indicates that $A-B$ is positive semidefinite. $I$ is an identity matrix. The induced norm from $H \succeq 0$ is $\|\x\|_H := \sqrt{\x^\top H\x}$. $\prox_g$ refers to the proximal mapping of a convex function $g$; see~\cite{Parikh-2014-Proximal}. The notation $a_t = o(b_t)$ means $a_t/b_t \rightarrow 0$ as $t \rightarrow +\infty$.   

\textbf{Objectives in convex composite optimization:} We wish to find a point that globally minimizes the objective $F$.
\begin{definition}
$\x^* \in \br^d$ is an optimal solution set to problem~\eqref{prob:main} if $F(\x) - F(\x^*) \geq 0$ for all $\x \in \br^d$. 
\end{definition}

In general, finding one global optimal solution is NP-hard~\cite{Murty-1987-Some} but standard for convex composite optimization.
\begin{assumption}\label{Assumption:objective-convex-main}
$f$ and $r$ are both convex:
\begin{equation*}
\begin{array}{rl}
f(\y) - f(\x) - (\y - \x)^\top\nabla f(\x) \ \geq \ 0, & \forall \x, \y \in \br^d, \\
r(\y) - r(\x) - (\y - \x)^\top\xi \ \geq \ 0, & \forall \x, \y \in \br^d. 
\end{array}
\end{equation*}
where $\xi \in \partial r(\x)$ is a subgradient of $r$ at $\x$. 
\end{assumption}

Assumption~\ref{Assumption:objective-convex-main} makes the convergence of algorithms to $\x^*$ computationally feasible. For the Newton-type methods, the convergence property depends on the scaled proximal mapping~\cite{Patrinos-2013-Proximal, Lee-2014-Proximal} of $r$ and the gradient and Hessian of $f$. Thus, it is necessary to impose additional conditions on $f$ and $r$. A minimal set of conditions that have become standard in the literature~\cite{Boyd-2004-Convex, Patrinos-2013-Proximal, Lee-2014-Proximal, Nesterov-2018-Lectures} are as follows: 
\begin{assumption}\label{Assumption:smooth-main}
$f$ is $\ell$-gradient and $\rho$-Hessian Lipschitz: $\|\nabla f(\x) - \nabla f(\y)\| \leq \ell\|\x - \y\|$ and $\|\nabla^2 f(\x) - \nabla^2 f(\y)\| \leq \rho\|\x - \y\|$ for all $\x, \y \in \br^d$.
\end{assumption}
\begin{assumption}\label{Assumption:proximal-main}
The scaled proximal mapping of $r$ with a matrix $H \succ 0$, i.e., $\prox_r^H(\x) := \argmin \ \{r(\z) + \frac{1}{2}\|\z - \x\|_H^2\}$, can be efficiently computed for any $\x \in \br^d$.  
\end{assumption}

Assumption~\ref{Assumption:smooth-main} is satisfied in many applications. $f$ is smooth and $r$ is an indicator function of a convex and bounded set~\cite{Parikh-2014-Proximal}. Assumption~\ref{Assumption:proximal-main} is also not restrictive since $\prox_r^H(\x)$ can be efficiently computed using subgradient method~\cite{Boyd-2004-Convex} in general. In sparse learning when $r$ is $\ell_1$-norm, even faster accelerated projected gradient method~\cite{Beck-2009-Fast} can be applicable. In addition, some stopping criteria for approximating $\prox_r^H(\x)$ are discussed in~\cite{Lee-2014-Proximal}. On the other hand, there is rich curvature information of $f$ around $\x^*$ which stands for fast local convergence of the algorithms~\cite{Roosta-2019-Sub}. Thus, we make the following assumption. 
\begin{assumption}\label{Assumption:regular-main}
$\nabla^2 f(\x^*)$ is invertible. 
\end{assumption}

Throughout this paper, the algorithm efficiency is quantified by the order of convergence to $\x^*$. As an example, the order of the convergence of the proximal Newton method is at least two under certain conditions~\cite{Lee-2014-Proximal}. Formally,
\begin{definition}\label{Def:order-convergence}
Letting $\{\x_t\}_{t\geq 0}$ be the iterates generated by an algorithm and $r_t = C\|\x_t - \x^*\|$ for some constants $C > 0$, the order of convergence is $n$ if $r_t \leq r_0^{a_t}$ for all $t \geq 0$ and $\log(a_t)/t \rightarrow \log(n)$ as $t \rightarrow +\infty$.  
\end{definition}

With these definitions in mind, we ask if Newton-type methods can achieve favorable per-iteration cost while not sacrificing the order of convergence too much. 

\section{Algorithm}\label{sec:algorithm}
In this section, we present two new methods for solving~\eqref{prob:main}. These two methods can be interpreted as exact and inexact proximal extension of the Shamanskii method~\cite{Shamanskii-1967-Modification} for convex optimization in composite form. 

\subsection{Generic proximal Shamanskii method}
We present some basic ideas behind the algorithmic design of generic proximal Shamanskii method. Recalling that each iteration of proximal Newton method is defined by minimizing the sum of the second-order Taylor expansion of $f$ at $\x_t$ and $r$ as follows:   
\begin{equation*}
\min_{\x \in \br^d} \ (\x-\x_t)^\top\nabla f(\x_t) + \frac{1}{2}(\x-\x_t)^\top\nabla^2 f(\x_t)(\x-\x_t) + r(\x). 
\end{equation*}
This is called the proximal Newton step and can be solved by iterative solvers where Hessian-vector products dominate the cost. The computational cost is high when the Hessian is dense and of high dimension. In Algorithm~\ref{Algorithm:PSA}, we update $H_t$ using a positive-definite approximation to the Hessian $\nabla^2 f(\x_t)$ if $t \ \text{mod} \ n = 0$ and conduct the backtracking line search~\cite{Boyd-2004-Convex} to select a suitable stepsize $\alpha > 0$ at each iteration. We let $\Delta\x_t =\x' - \x_t$ and describe the sufficient descent criterion for backtracking line search as follows:  
\begin{equation*}\small
F(\x_t + \alpha\Delta\x_t) - F(\x_t) \leq (\alpha/4)((\Delta\x_t)^\top\nabla f(\x_t) + r(\x') - r(\x_t)). 
\end{equation*}
For the next $n-1$ iterations, we compute a proximal Newton step with possibly the delayed Hessian $H_t$. 

Furthermore, this approach pursues a good balance between the order of convergence and per-iteration cost. Indeed, we can find the desired sparse matrix $R$ using sparse Cholesky factorization~\cite{Demmel-1997-Applied, Chen-2008-Algorithm} if $t \ \text{mod} \ n = 0$ and compute proximal Newton steps with $R$ instead of $H_t$; see Algorithm~\ref{Algorithm:PSA}. Since $R$ is sparse, the per-iteration cost can be much cheaper than proximal Newton method. On the other hand, we show that the order of convergence is $\sqrt[n]{n+1}$ which does not deteriorate too much if we choose $n \geq 1$ properly. 

Finally, our method covers a few classical methods as special cases. It becomes proximal Newton method when $n=1$ and the chord method~\cite{Kelley-2003-Solving} when $n = \infty$ and $r = 0$. 
\begin{algorithm}[!t]
\caption{A generic proximal Shamanskii method}\label{Algorithm:PSA}
\begin{algorithmic}
\STATE \textbf{Input:} $n \geq 1$ and $\x_0\in\br^d$. 
\FOR{$t=0, 1, 2, \ldots, T-1$}
\IF{$t \ \text{mod} \ n = 0$}
\STATE Update $H_t$ using a positive-definite approximation to the Hessian $\nabla^2 f(\x_t)$.
\STATE Find $R \in \br^{d \times d}$ so that $H_t = RR^\top$ and $R$ is sparse.  
\ENDIF
\STATE $\x' = \argmin \{r(\z) + (\z-\x_t)^\top\nabla f(\x_t) + \frac{1}{2}\|R(\z - \x_t)\|^2\}$. 
\STATE Update $\alpha \in (0, 1]$ using backtracking line search. 
\STATE $\Delta\x_t =\x' - \x_t$ and $\x_{t+1} = \x_t + \alpha\Delta\x_t$. 
\ENDFOR
\STATE \textbf{Output:} $\x_T$. 
\end{algorithmic}
\end{algorithm}
\subsection{Inexact proximal Shamanskii method}
We propose inexact proximal Shamanskii method which generalizes inexact Newton method~\cite{Dembo-1982-Inexact, Eisenstat-1996-Choosing}. This approach is crucial in practice since it is impossible to perform an exact proximal Newton step in general. For example, when $r$ is $\ell_1$-norm, Assumption~\ref{Assumption:proximal-main} is satisfied but an exact proximal Newton step does not have a closed-form solution. Since we are interested in the local behavior, we assume that $\x_t$ is sufficiently close to $\x^*$ and $\alpha=1$ which are made in~\cite{Dembo-1982-Inexact, Eisenstat-1996-Choosing} for analyzing inexact Newton method and~\cite{Lee-2014-Proximal} for analyzing inexact proximal Newton method. The remaining practical concern is how inexactly we perform a proximal Newton step is critical to the performance of the method. 

We first define two key notions: $G(\x) = \ell(\x - \prox_{r/\ell}(\x - \nabla f(\x)/\ell))$ and $\widehat{G}(\x', \x, H) = \ell(\x' - \prox_{r/\ell}(\x' - (\nabla f(\x) + H(\x' - \x))/\ell))$. The first one is called \textit{composite gradient} which generalizes gradient to convex composite optimization~\cite{Nesterov-2018-Lectures} and the second one is the composite gradient for each proximal Newton step and resorts to measure the near-stationarity of $\x'$ for solving the proximal Newton step. $\|G(\x_t)\|$ refers to the near-stationarity of $\x_t$ for solving problem~\eqref{prob:main} and $\|\widehat{G}(\x_{t+1}, \x_t, H)\|$ characterizes the extent of the exactness of the proximal Newton step. Moreover, computing $G(\x)$ and $\widehat{G}(\x', \x, H)$ is relatively cheap since the proximal mapping $\prox_{r/\ell}$ has the closed-form solution for many commonly used functions $r$, e.g., $\ell_1$-norm.   

Now we can define the stopping criterion with $\eta_t > 0$ and $\gamma \geq 1$ as follows: 
\begin{equation}\label{criterion:stop-rule}
\|\widehat{G}(\x_{t+1}, \x_t, H_t)\| \ \leq \ \eta_t\|G(\x_t)\|^\gamma. 
\end{equation}
which implies that we do not need to solve the proximal Newton step very accurately when the iterate $\x_t$ is far from the optimal solution $\x^*$, i.e., $\|G(\x_t)\|$ is large. 
\begin{algorithm}[!t]
\caption{Inexact proximal Shamanskii method}\label{Algorithm:IPSA}
\begin{algorithmic}
\STATE \textbf{Input:} $n \geq 1$, $\x_0\in\br^d$, $\{\eta_t\}_{t \geq 0}$ and $\gamma \geq 1$. 
\FOR{$t=0, 1, 2, \ldots, T-1$}
\IF{$t \ \text{mod} \ n = 0$}
\STATE Find $R \in \br^{d \times d}$ so that $\nabla^2 f(\x_t) = RR^\top$ and $R$ is sparse. 
\ENDIF
\STATE $\x_{t+1} \approx \argmin \{r(\z) + (\z-\x_t)^\top\nabla f(\x_t) + \frac{1}{2}\|R(\z - \x_t)\|^2\}$ such that the stopping criterion~\eqref{criterion:stop-rule} is satisfied. 
\ENDFOR
\end{algorithmic}
\end{algorithm}
\section{Convergence results}\label{sec:convergence}
We first provide the global convergence guarantee for Algorithm~\ref{Algorithm:PSA}. Even if there are many similar results for Newton methods, e.g.,~\cite[Section 4]{Patriksson-2013-Nonlinear}, our result is the first global convergence for general Shamanskii-type methods to our knowledge.
\begin{theorem}\label{theorem:generic-global}
Under Assumption~\ref{Assumption:objective-convex-main}-\ref{Assumption:proximal-main} and $H_t \succeq m I$ for some $m > 0$, the iterate $\{\x_t\}_{t\geq 0}$ generated by Algorithm~\ref{Algorithm:PSA} satisfies that $\|\Delta\x_t\| \rightarrow 0$ and $\x_t$ converges to $\x^*$ as $t \rightarrow +\infty$. 
\end{theorem}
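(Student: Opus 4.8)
The plan is to treat Algorithm~\ref{Algorithm:PSA} as a damped descent method for $F$ and run the classical scheme: show the search direction $\Delta\x_t$ guarantees decrease, show the line search accepts a stepsize bounded away from $0$, and convert the resulting summable decrease into $\|\Delta\x_t\|\to 0$ with every limit point of $\{\x_t\}$ pinned to the unique minimizer. First I would extract a descent inequality from the subproblem. Because $\x'$ minimizes $r(\z)+(\z-\x_t)^\top\nabla f(\x_t)+\frac12\|R(\z-\x_t)\|^2$ with $H_t=RR^\top$, first-order optimality yields $\xi:=-\nabla f(\x_t)-H_t\Delta\x_t\in\partial r(\x')$. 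Inserting this $\xi$ into the convexity inequality for $r$ in Assumption~\ref{Assumption:objective-convex-main} (with the roles $\x=\x'$ and $\y=\x_t$) and rearranging gives
\begin{equation*}
(\Delta\x_t)^\top\nabla f(\x_t)+r(\x')-r(\x_t)\ \leq\ -\|\Delta\x_t\|_{H_t}^2\ \leq\ -m\|\Delta\x_t\|^2,
\end{equation*}
where the last step uses $H_t\succeq mI$. Hence the quantity $D_t$ appearing on the right of the Armijo test is strictly negative whenever $\Delta\x_t\neq 0$, so $\Delta\x_t$ is a genuine descent direction and a fixed point ($\Delta\x_t=0$) is exactly a stationary point of $F$.

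Next I would show backtracking terminates uniformly. Combining the $\ell$-smoothness estimate $f(\x_t+\alpha\Delta\x_t)\le f(\x_t)+\alpha(\Delta\x_t)^\top\nabla f(\x_t)+\frac{\ell\alpha^2}{2}\|\Delta\x_t\|^2$ from Assumption~\ref{Assumption:smooth-main} with the convexity bound $r(\x_t+\alpha\Delta\x_t)\le(1-\alpha)r(\x_t)+\alpha r(\x')$ gives $F(\x_t+\alpha\Delta\x_t)-F(\x_t)\le\alpha D_t+\frac{\ell\alpha^2}{2}\|\Delta\x_t\|^2$. Comparing this with the sufficient-descent criterion and using $D_t\le-m\|\Delta\x_t\|^2$ shows the criterion holds for every $\alpha\le 3m/(2\ell)$; therefore backtracking returns some $\alpha_t\ge\bar\alpha>0$ independent of $t$. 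The Armijo test then yields $F(\x_{t+1})-F(\x_t)\le-\frac{\bar\alpha m}{4}\|\Delta\x_t\|^2$. Since $F(\x_t)\ge F(\x^*)$ and $\{F(\x_t)\}$ is nonincreasing, it converges, and telescoping makes $\sum_t\|\Delta\x_t\|^2$ finite, so $\|\Delta\x_t\|\to 0$.

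Finally I would upgrade this to $\x_t\to\x^*$. Assumption~\ref{Assumption:regular-main} together with convexity forces $\nabla^2 f(\x^*)\succ 0$, so $F$ is locally strongly convex near $\x^*$; a convex function with a strict local minimizer has it as its unique global minimizer, and a convex function whose minimizer set is a single point has bounded sublevel sets. Thus $\{\x_t\}\subseteq\{\x:F(\x)\le F(\x_0)\}$ is bounded and admits a convergent subsequence $\x_{t_k}\to\bar\x$; since $\|\Delta\x_t\|\to 0$ we also have $\x_{t_k}+\Delta\x_{t_k}\to\bar\x$. Passing to the limit in $-\nabla f(\x_{t_k})-H_{t_k}\Delta\x_{t_k}\in\partial r(\x_{t_k}+\Delta\x_{t_k})$ — using continuity of $\nabla f$, the bound $\|H_t\|\le\ell$ (valid since $\nabla^2 f\preceq\ell I$ under Assumption~\ref{Assumption:smooth-main} and $H_t$ approximates the Hessian, so $H_{t_k}\Delta\x_{t_k}\to 0$), and closedness of the graph of $\partial r$ — gives $-\nabla f(\bar\x)\in\partial r(\bar\x)$, i.e. $\bar\x=\x^*$. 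As the bounded sequence has every limit point equal to the unique $\x^*$, we conclude $\x_t\to\x^*$.

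I expect the main obstacle to be the uniformity and compactness ingredients underpinning the limit passage, rather than the algebra: securing a stepsize bounded below even though $H_t$ is refreshed only once every $n$ iterations (where both $H_t\succeq mI$ and the global upper bound $\|H_t\|\le\ell$ are needed, the latter not stated explicitly in the hypotheses), and establishing boundedness of $\{\x_t\}$ so the optimality condition survives the limit. The cleanest route to the latter is the recession-cone characterization of bounded sublevel sets for convex functions, which converts Assumption~\ref{Assumption:regular-main} directly into the compactness we need.
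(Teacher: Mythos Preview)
Your route to $\|\Delta\x_t\|\to 0$ is essentially identical to the paper's: both combine the $\ell$-smoothness descent lemma for $f$ with the convexity bound $r(\x_t+\alpha\Delta\x_t)\le(1-\alpha)r(\x_t)+\alpha r(\x')$, use $D_t\le -\|\Delta\x_t\|_{H_t}^2\le -m\|\Delta\x_t\|^2$ (the paper cites \cite[Proposition~2.4]{Lee-2014-Proximal} for this; you derive it directly from the subproblem's optimality condition and the convexity of $r$, which amounts to the same thing), deduce that the Armijo test holds for all $\alpha\le 3m/(2\ell)$ so backtracking returns $\alpha_t\ge\underline\alpha>0$, and telescope to get $\sum_t\|\Delta\x_t\|^2<\infty$.

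Where you and the paper differ is the step from $\|\Delta\x_t\|\to 0$ to $\x_t\to\x^*$. The paper's argument is terse: it only verifies that $\|\Delta\x_t\|=0$ is equivalent to $\x_t$ being optimal (via the subproblem's first-order condition), and stops there---no boundedness of $\{\x_t\}$, no subsequence extraction, no limit passage through $\partial r$. Your subsequence/closed-graph argument is more thorough and actually closes the convergence claim; the cost is that you invoke Assumption~\ref{Assumption:regular-main} (for uniqueness of $\x^*$ and compact sublevel sets) and an upper bound $\|H_t\|\le\ell$, neither of which is among the theorem's stated hypotheses (only Assumptions~\ref{Assumption:objective-convex-main}--\ref{Assumption:proximal-main} and $H_t\succeq mI$). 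You flag both issues yourself in the final paragraph, rightly identifying them as the genuine obstacles; what you may not have anticipated is that the paper simply does not address them.
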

\begin{remark}
Algorithm~\ref{Algorithm:PSA} behave like a first-order method if $H_t$ does not approximate the Hessian $\nabla^2 f(\x_t)$ well. In this case, even when the iterate $\x_t$ is very close to $\x^*$, the local rate will be at most linear in general and $\alpha < 1$. 
\end{remark}

We proceed to the local convergence of Algorithm~\ref{Algorithm:PSA}. The first theorem focuses on the case when $H_t = \nabla^2 f(\x_t)$. 
\begin{theorem}\label{Theorem:local-exact-main}
Under Assumption~\ref{Assumption:objective-convex-main}-\ref{Assumption:regular-main} and let $\{\x_t\}_{t\geq 0}$ be generated by Algorithm~\ref{Algorithm:PSA}, $\alpha=1$ is satisfied by sufficient descent criterion for some large $t$. There also exists $\delta, m >0$ such that, if we let $r_t = (3\rho/2m)\|\x_t - \x^*\|$ and set $\|\x_0 - \x^*\| \leq \{\delta, 2m/3\rho\}$, then $r_t \leq 1$ for all $t \geq 0$ and $r_t \leq r_0^{(n+1)^k(l+1)}$ for $t = nk+l$ where $k \geq 0$ and $0 \leq l \leq n-1$ are integers. 
\end{theorem}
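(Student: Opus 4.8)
The plan is to split the statement into two independent pieces: (i) the eventual acceptance of the unit stepsize $\alpha=1$ by the sufficient-descent test, and (ii) the explicit rate $r_t\le r_0^{(n+1)^k(l+1)}$, which I would derive from a \emph{single} one-step contraction inequality followed by an induction that treats the Hessian-refresh iterations and the frozen-Hessian iterations inside each length-$n$ cycle on exactly the same footing. Throughout I write $d_t=\|\x_t-\x^*\|$ and let $s=n\lfloor t/n\rfloor$ be the index at which the Hessian $H=\nabla^2 f(\x_s)$ currently in use was formed. First I would use Assumption~\ref{Assumption:regular-main} together with the Hessian-Lipschitz bound of Assumption~\ref{Assumption:smooth-main} to fix $\delta,m>0$ so that $\nabla^2 f(\z)\succeq mI$ on $\BB_\delta(\x^*)$, giving the $m$ in the statement.

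For (ii) the key is that the step is $\x_{t+1}=\prox_r^H(\x_t-H^{-1}\nabla f(\x_t))$ and that $\x^*$ is a fixed point of this map for \emph{every} $H\succ 0$, since $0\in\partial r(\x^*)+\nabla f(\x^*)$ is equivalent to $\x^*=\prox_r^H(\x^*-H^{-1}\nabla f(\x^*))$. Using nonexpansiveness of $\prox_r^H$ in $\|\cdot\|_H$ and writing $\bar H=\int_0^1\nabla^2 f(\x^*+\tau(\x_t-\x^*))\,d\tau$, I would obtain
\[
\|\x_{t+1}-\x^*\|_H \;\le\; \big\|(\x_t-\x^*)-H^{-1}(\nabla f(\x_t)-\nabla f(\x^*))\big\|_H \;=\; \big\|H^{-1}(H-\bar H)(\x_t-\x^*)\big\|_H.
\]
Bounding $\|H-\bar H\|\le\rho(d_s+\tfrac12 d_t)$ by Hessian-Lipschitzness and converting between $\|\cdot\|_H$ and $\|\cdot\|$ via $mI\preceq H$ yields $d_{t+1}\le\frac{\rho}{m}(d_s+\tfrac12 d_t)d_t$. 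Rewriting in the scaled variable $r_t=(3\rho/2m)d_t$ collapses all constants into the clean inequality $r_{t+1}\le\tfrac23 r_s r_t+\tfrac13 r_t^2$, which specializes to the quadratic bound $r_{t+1}\le r_t^2$ exactly at the refresh iterations $s=t$.

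The payoff is that one inequality governs both step types. Starting from $r_0\le 1$ (forced by the initialization $\|\x_0-\x^*\|\le 2m/3\rho$), an induction inside each cycle $t=nk+l$ shows $r_{t+1}\le r_s$, hence $r_t\le r_s\le 1$ throughout the cycle, which collapses $\tfrac23 r_s r_t+\tfrac13 r_t^2\le r_s r_t$. Taking logarithms, the exponent $e(t)$ in $r_t\le r_0^{e(t)}$ obeys the recursion $e(t+1)=e(t)+e(s)$ with $e(nk)=(n+1)^k$; solving it gives $e(nk+l)=(l+1)(n+1)^k$, and checking the boundary $l=n-1\to$ next refresh confirms $e(n(k+1))=(n+1)^{k+1}$, so the formula propagates seamlessly across cycles. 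Monotonicity $d_t\le d_0\le\delta$ keeps every iterate in $\BB_\delta(\x^*)$, so $\nabla^2 f(\x_s)\succeq mI$ stays valid at each refresh.

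For (i) I would verify the sufficient-descent test for $\alpha=1$ once $\x_t$ is near $\x^*$. Strong convexity of the subproblem gives the sharp estimate $D_t:=(\Delta\x_t)^\top\nabla f(\x_t)+r(\x')-r(\x_t)\le-\|\Delta\x_t\|_H^2\le-m\|\Delta\x_t\|^2$, and a third-order Taylor bound on $f$ combined with $\|H-\nabla^2 f(\x_t)\|\le\rho\|\x_s-\x_t\|$ gives
\[
F(\x_t+\Delta\x_t)-F(\x_t)\;\le\;\tfrac12 D_t+\tfrac{\rho}{2}\|\x_s-\x_t\|\,\|\Delta\x_t\|^2+\tfrac{\rho}{6}\|\Delta\x_t\|^3.
\]
Since $-\tfrac14 D_t\ge\tfrac{m}{4}\|\Delta\x_t\|^2$ dominates the two remainders as $\|\Delta\x_t\|\to 0$ and $\|\x_s-\x_t\|\to 0$, the criterion $F(\x_t+\Delta\x_t)-F(\x_t)\le\tfrac14 D_t$ holds for all large $t$; here Theorem~\ref{theorem:generic-global} supplies $\|\Delta\x_t\|\to 0$ and $\x_t\to\x^*$, letting us re-index so $\x_0$ already lies in the good neighborhood. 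I expect the main obstacle to be pinning down the constants so that the one-step estimate reduces \emph{exactly} to $r_{t+1}\le r_s r_t$, and relatedly controlling the frozen-Hessian staleness $\|\x_s-\x_t\|$ uniformly over a cycle — this is precisely where the factor $3\rho/2m$ in the definition of $r_t$ and the cycle-wise monotonicity $r_t\le r_s$ are engineered to make the exponent bookkeeping close cleanly.
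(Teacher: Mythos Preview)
Your proposal is correct and lands on exactly the same one-step inequality as the paper, namely
\[
\|\x_{t+1}-\x^*\|\le\frac{\rho}{m}\,\|\x_t-\x^*\|\Big(\|\x_s-\x^*\|+\tfrac12\|\x_t-\x^*\|\Big),
\]
but you reach it by a different mechanism. The paper uses monotonicity of $\partial r$: from $-\nabla f(\x^*)\in\partial r(\x^*)$ and $-\nabla f(\x_t)-\nabla^2 f(\x_s)(\x_{t+1}-\x_t)\in\partial r(\x_{t+1})$ it extracts an inner-product inequality that, combined with $\nabla^2 f(\x_s)\succeq mI$, yields $m\|\x_{t+1}-\x^*\|\le\|\nabla f(\x^*)-\nabla f(\x_t)-\nabla^2 f(\x_s)(\x^*-\x_t)\|$. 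You instead view the step as $\x_{t+1}=\prox_r^H(\x_t-H^{-1}\nabla f(\x_t))$, observe that $\x^*$ is a fixed point of this map for any $H\succ0$, and invoke nonexpansiveness of $\prox_r^H$ in $\|\cdot\|_H$; converting norms via $H\succeq mI$ gives the same bound. Your route is a bit more geometric and makes the role of $H$ transparent, while the paper's subdifferential argument is more elementary and avoids any appeal to properties of the scaled prox beyond its optimality condition; both cost the same amount of work. Your exponent bookkeeping $e(t+1)=e(t)+e(s)$ is also identical in content to the paper's case split on $l=0$ versus $l>0$.

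For part (i) you are actually more careful than the paper: you include the staleness term $\tfrac{\rho}{2}\|\x_s-\x_t\|\,\|\Delta\x_t\|^2$ coming from $\|H-\nabla^2 f(\x_t)\|$, whereas the paper's displayed Taylor bound writes $H_t=\nabla^2 f(\x_t)$ and omits this correction. Since $\|\x_s-\x_t\|\to0$ anyway, both arguments close, but yours covers the frozen-Hessian iterations without ambiguity.
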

\begin{remark}\label{Remark:local-exact-main}
Using Theorem~\ref{Theorem:local-exact-main}, we have $a_t = (n+1)^k(l+1)$ for $t = nk+l$ in Definition~\ref{Def:order-convergence} and 
\begin{equation*}
\lim_{t \rightarrow +\infty} \frac{\log(a_t)}{t} = \lim_{k \rightarrow +\infty} \frac{k\log(n+1) + \log(l+1)}{nk+l} = \log(\sqrt[n]{n+1}). 
\end{equation*}
This implies that the order of convergence is at least $\sqrt[n]{n+1}$. 
\end{remark}
\begin{remark}
When $r=0$ and $f$ satisfies further regularity conditions, Theorem~\ref{Theorem:local-exact-main} can be derived using the results in~\cite{Shamanskii-1967-Modification}. Compared to their proof, our techniques are much simpler and can tackle the case when $r \neq 0$. 
\end{remark}

The second theorem focuses on the case when $H_t \approx \nabla^2 f(\x_t)$ such that $\|H_t - \nabla^2 f(\x_t)\| = o(1)$ holds true. 
\begin{theorem}\label{Theorem:local-quasi-main}
Under Assumption~\ref{Assumption:objective-convex-main}-\ref{Assumption:regular-main} and let $H_t \succeq m I$ for some $m > 0$ and satisfies $\|H_t - \nabla^2 f(\x_t)\| = o(1)$. If $\x_0$ is sufficiently close to $\x^*$, then the iterates $\{\x_t\}_{t\geq 0}$ generated by Algorithm~\ref{Algorithm:PSA} achieves the superlinear convergence. 
\end{theorem}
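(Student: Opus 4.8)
The plan is to combine the global convergence already established in Theorem~\ref{theorem:generic-global} with a Dennis--Mor\'e-type asymptotic analysis, so that the only genuinely new work is local. Since Assumption~\ref{Assumption:objective-convex-main}--\ref{Assumption:proximal-main} and $H_t \succeq mI$ all hold here, Theorem~\ref{theorem:generic-global} already guarantees $\x_t \to \x^*$ and $\|\Delta\x_t\| \to 0$; moreover Assumption~\ref{Assumption:regular-main} makes $f$ locally strongly convex near $\x^*$, so $\x^*$ is locally unique and the tail of the sequence eventually enters a fixed neighborhood $\BB_\delta(\x^*)$. This sidesteps any delicate local bootstrapping of convergence. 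Throughout I treat $H_t$ as the matrix actually used at iteration $t$ (the cycle factor $RR^\top$ frozen at the last refresh), so that the hypothesis $\|H_t - \nabla^2 f(\x_t)\| = o(1)$, which absorbs both the quasi-Newton approximation error and the staleness of the delayed Hessian, is exactly what the argument will consume.

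First I would show the backtracking line search accepts $\alpha = 1$ for all large $t$, which is what upgrades the rate from linear to superlinear. Writing $\x' = \x_{t+1}$ and $D_t := (\Delta\x_t)^\top\nabla f(\x_t) + r(\x') - r(\x_t)$, the subgradient inequality at the subproblem optimum gives $D_t \leq -(\Delta\x_t)^\top H_t \Delta\x_t \leq -m\|\Delta\x_t\|^2$. Expanding $f(\x') - f(\x_t)$ to second order and using $\rho$-Hessian Lipschitzness (Assumption~\ref{Assumption:smooth-main}) yields $F(\x') - F(\x_t) \leq D_t + \tfrac12(\Delta\x_t)^\top\nabla^2 f(\x_t)\Delta\x_t + \tfrac{\rho}{6}\|\Delta\x_t\|^3$. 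Replacing $\nabla^2 f(\x_t)$ by $H_t$ at the cost of $\|H_t - \nabla^2 f(\x_t)\|\,\|\Delta\x_t\|^2$ reduces the sufficient-descent test with $\alpha=1$ to $-\tfrac{m}{4}\|\Delta\x_t\|^2 + \tfrac12\|H_t - \nabla^2 f(\x_t)\|\,\|\Delta\x_t\|^2 + \tfrac{\rho}{6}\|\Delta\x_t\|^3 \leq 0$, which holds once $\|H_t - \nabla^2 f(\x_t)\| = o(1)$ and $\|\Delta\x_t\| \to 0$ are both small enough. Hence $\x_{t+1} = \x'$ eventually.

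Next I would run the Dennis--Mor\'e argument on the exact step. With $\alpha=1$, subproblem optimality produces $\xi_{t+1} := -\nabla f(\x_t) - H_t(\x_{t+1} - \x_t) \in \partial r(\x_{t+1})$, while stationarity at the minimizer gives $\xi^* := -\nabla f(\x^*) \in \partial r(\x^*)$. Monotonicity of $\partial r$ (Assumption~\ref{Assumption:objective-convex-main}) then yields $(\x_{t+1} - \x^*)^\top(\xi_{t+1} - \xi^*) \geq 0$. Substituting, splitting $\x_{t+1} - \x_t = (\x_{t+1} - \x^*) + (\x^* - \x_t)$, and using the integral identity $\nabla f(\x^*) - \nabla f(\x_t) = \bar{H}_t(\x^* - \x_t)$ with $\bar{H}_t := \int_0^1 \nabla^2 f(\x_t + s(\x^* - \x_t))\,ds$ collapses everything to $(\x_{t+1}-\x^*)^\top H_t(\x_{t+1} - \x^*) \leq (\x_{t+1}-\x^*)^\top(\bar{H}_t - H_t)(\x^* - \x_t)$. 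Using $H_t \succeq mI$ and Cauchy--Schwarz gives $\|\x_{t+1} - \x^*\| \leq \tfrac1m\|\bar{H}_t - H_t\|\,\|\x_t - \x^*\|$.

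Finally I would show the contraction factor vanishes. Splitting $\|\bar{H}_t - H_t\| \leq \|\bar{H}_t - \nabla^2 f(\x_t)\| + \|\nabla^2 f(\x_t) - H_t\|$, the first term is at most $\tfrac{\rho}{2}\|\x_t - \x^*\|$ by Hessian Lipschitzness and the second is $o(1)$ by hypothesis; since $\x_t \to \x^*$, both tend to $0$, so $\|\x_{t+1} - \x^*\| \leq c_t\|\x_t - \x^*\|$ with $c_t \to 0$, which is exactly superlinear convergence. I expect the main obstacle to be the unit-step step rather than the Dennis--Mor\'e estimate: establishing that $\alpha = 1$ survives the backtracking test uniformly in the tail requires trading the exact curvature $\nabla^2 f(\x_t)$ against the approximate and stale $H_t$ inside a third-order bound, and it is precisely here that the $o(1)$ condition, rather than mere boundedness of $\|H_t - \nabla^2 f(\x_t)\|$ (which would only deliver the linear, $\alpha<1$ behavior noted in the remark after Theorem~\ref{theorem:generic-global}), must be invoked. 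A secondary subtlety is verifying that the frozen cycle factor still satisfies $\|H_t - \nabla^2 f(\x_t)\| = o(1)$ across the inner iterations, which follows from $\rho$-Hessian Lipschitzness together with $\|\Delta\x_t\| \to 0$.
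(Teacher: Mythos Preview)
Your argument is correct and follows the same two-part scaffold as the paper: first show that the backtracking search eventually accepts $\alpha=1$ via a third-order Taylor bound plus the $o(1)$ Hessian-approximation hypothesis, then run a monotonicity argument on $\partial r$ at the pair $(\x_{t+1},\x^*)$ to extract a contraction with vanishing factor. The unit-step part is essentially identical to the paper's.

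The one noteworthy difference is in the error decomposition. You lower-bound the quadratic form using the hypothesis $H_t\succeq mI$ directly and compare $H_t$ against the averaged Hessian $\bar H_t=\int_0^1\nabla^2 f(\x_t+s(\x^*-\x_t))\,ds$, which immediately gives $\|\x_{t+1}-\x^*\|\le m^{-1}\|\bar H_t-H_t\|\,\|\x_t-\x^*\|$. The paper instead inserts $\nabla^2 f(\x_{t'})$ as the reference matrix, lower-bounds the quadratic form via Assumption~\ref{Assumption:regular-main}, and ends up with an error term containing $(\nabla^2 f(\x_{t'})-H_{t'})(\x_{t+1}-\x_t)$; because this involves $\x_{t+1}$, the paper must split $\x_{t+1}-\x_t$ and absorb a $\tfrac12\|\x_{t+1}-\x^*\|$ term back into the left-hand side. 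Your route avoids that absorption step and is a bit cleaner; the paper's route makes the dependence on the refresh index $t'$ explicit rather than folding staleness into the $o(1)$ hypothesis as you do in your final remark.
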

\begin{remark}
The approximation condition was recognized as standard for analyzing the local convergence of Newton-type method with inexact Hessian~\cite{Pilanci-2017-Newton, Kohler-2017-Sub, Roosta-2019-Sub}. In practice, we observe that it can be satisfied using some modified quasi-Newton update with first-order gradients.
\end{remark}

Finally, we provide the local convergence guarantee of Algorithm~\ref{Algorithm:IPSA}. For simplicity, we denote $\theta_1=(6\rho + 2m\ell)/m$, $\theta_2 = \sqrt[\gamma-1]{(3\rho/2m)^{\gamma-1} + (\ell/2)^{\gamma-1}}$ and $\theta_3 = 3\rho/2m + \ell/2$. 
\begin{theorem}\label{Theorem:local-inexact-main}
Under Assumption~\ref{Assumption:objective-convex-main}-\ref{Assumption:regular-main} and let $\{\x_t\}_{t\geq 0}$ be generated by Algorithm~\ref{Algorithm:IPSA} with $0 < \eta_t \leq \bar{\eta} < m/16\ell$ and
\begin{equation}\label{def:local-inexact-main}
r_t \ = \ \left\{\begin{array}{cl}
\theta_1\|\x_t - \x^*\|, & \gamma = 1, \\
\theta_2\|\x_t - \x^*\|, & \gamma \in (1, 2), \\
\theta_3\|\x_t - \x^*\|, & \gamma > 2. 
\end{array}\right. 
\end{equation}
for all $t \geq 0$. If $\x_0$ satisfies that 
\begin{equation}\label{condition:local-inexact-main}
\|\x_0 - \x^*\| \leq \left\{
\begin{array}{cl}
\min\{\delta, 1/\theta_1\}, & \gamma = 1, \\
\min\{\delta, 1/\theta_1, 1/\theta_2\}, & \gamma \in (1, 2), \\
\min\{\delta, 1/\theta_1, 1/\theta_3\}, & \gamma > 2. 
\end{array}
\right. 
\end{equation}
Then $r_t \leq 1$ for all $t \geq 0$ and the following statement holds: 
\begin{equation*}
\left\{\begin{array}{lcl}
r_t \leq r_{t-1}/2 & & \gamma=1 \ \text{and} \ \eta_t \nrightarrow 0, \\
r_t = o(r_{t-1}) & & \gamma=1 \ \text{and} \ \eta_t \rightarrow 0, \\
r_t \leq r_0^{(l(\gamma-1)+1)(n(\gamma-1)+1)^k} & \text{If} & \gamma \in (1, 2) \ \text{and} \ \eta_t \nrightarrow 0, \\
r_t = o(r_0^{(l(\gamma-1)+1)(n(\gamma-1)+1)^k}) & & \gamma \in (1, 2) \ \text{and} \ \eta_t \rightarrow 0, \\
r_t \leq r_0^{(n+1)^k(l+1)} & & \gamma \geq 2. 
\end{array}\right.
\end{equation*}
for $t = nk+l$ where $k \geq 0$ and $0 \leq l \leq n-1$ are integers. 
\end{theorem}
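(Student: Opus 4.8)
The plan is to collapse the whole analysis into a single scalar recursion for $e_t := \|\x_t - \x^*\|$ and then unroll it over the $n$-periodic Hessian refresh. Throughout I work in a neighborhood $\BB_\delta(\x^*)$ small enough that, by Assumption~\ref{Assumption:regular-main} and continuity of $\nabla^2 f$, the refreshed Hessian $H_t = \nabla^2 f(\x_{t_0})$ (with $t_0 = n\lfloor t/n\rfloor$ the cycle start) obeys $H_t \succeq mI$; this is what supplies the constant $m$ and lets me treat each inner step as an inexact chord step with a frozen, uniformly positive-definite scaling. I would first assemble three local estimates. (i) A composite-gradient bound $\|G(\x_t)\| \le \ell\, e_t$, which in the smooth case is just Lipschitzness of $\nabla f$ and in the composite case follows from nonexpansiveness of $\prox_{r/\ell}$ together with $G(\x^*)=0$. (ii) An inexactness-to-distance estimate: since the subproblem is $m$-strongly convex, its stationarity residual $\widehat{G}(\x_{t+1},\x_t,H_t)$ controls the gap to the exact subproblem solution $\x_{t+1}^{\mathrm{ex}}$, giving $\|\x_{t+1}-\x_{t+1}^{\mathrm{ex}}\| \le \tfrac1m\|\widehat{G}(\x_{t+1},\x_t,H_t)\| \le \tfrac1m \eta_t (\ell e_t)^\gamma$ by the stopping rule~\eqref{criterion:stop-rule}. (iii) A frozen-Hessian (chord) bound $\|\x_{t+1}^{\mathrm{ex}}-\x^*\| \le \tfrac{\rho}{m}\bigl(e_{t_0}+\tfrac12 e_t\bigr)e_t \le \tfrac{3\rho}{2m}\, e_{t_0} e_t$, obtained from the integral form of the residual together with $\rho$-Hessian Lipschitzness (Assumption~\ref{Assumption:smooth-main}) and $\|H_t^{-1}\|\le 1/m$. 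The triangle inequality then yields the master per-step recursion $e_{t+1} \le \tfrac{3\rho}{2m} e_{t_0} e_t + \tfrac1m \eta_t (\ell e_t)^\gamma$, valid while the iterates stay in $\BB_\delta(\x^*)$.

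Next I would rescale to the dimensionless error $r_t$ as in~\eqref{def:local-inexact-main} and run the case analysis. For $\gamma>2$ the inexact term is of higher order than the bilinear chord term, so after bounding $e_t^\gamma \le e_t^2 \le e_{t_0}e_t$ the recursion collapses to $r_{t+1}\le r_{t_0} r_t$ with the constant $\theta_3 = 3\rho/2m+\ell/2$; this is exactly the exact-Shamanskii recursion, and unrolling it reproduces the exponent $(n+1)^k(l+1)$ of Theorem~\ref{Theorem:local-exact-main}. For $\gamma=1$ the inexact term is linear and dominates the (second-order) chord term, so the cycle structure drops out and one gets a pure contraction $r_{t+1}\le \bar\eta\cdot(\text{const})\cdot r_t$; the hypothesis $\bar\eta<m/16\ell$ with $\theta_1=(6\rho+2m\ell)/m$ forces the factor below $1/2$, giving $r_t\le r_{t-1}/2$, and when $\eta_t\to0$ the same factor tends to $0$ and upgrades the bound to $r_t=o(r_{t-1})$. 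The delicate regime is $\gamma\in(1,2)$, where the two error terms have different homogeneity degrees ($e_{t_0}e_t$ of degree two and $e_t^\gamma$ of degree $\gamma$) and must be fused into a single term $e_{t_0}^{\gamma-1}e_t$; here I would use $e_t\le e_{t_0}\le 1/\theta_2$ (from the invariant $r_t\le1$ below) to trade the surplus powers of $e_{t_0}$, and the specific form $\theta_2=\sqrt[\gamma-1]{(3\rho/2m)^{\gamma-1}+(\ell/2)^{\gamma-1}}$ is precisely the power-mean constant that makes the two pieces combine into $r_{t_0}^{\gamma-1}r_t$, i.e.\ $r_{t+1}\le r_{t_0}^{\gamma-1}r_t$.

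With the per-regime recursions in hand, I would close the argument in two steps. First, an induction on $t$ establishes the invariant $r_t\le1$: the base case is exactly the initial condition~\eqref{condition:local-inexact-main}, and each recursion maps $[0,1]^2$ into $[0,1]$ (since $r_{t_0}^{\gamma-1}\le1$ and $r_t\le1$), which simultaneously certifies that the iterates never leave $\BB_\delta(\x^*)$ and that the within-cycle monotonicity $e_t\le e_{t_0}$ used above is legitimate. Second, I unroll the multiplicative recursion over a cycle and across cycles: writing $r_{nk+l}=r_0^{a_{k,l}}$, the relation $r_{t+1}\le r_{t_0}^{\gamma-1}r_t$ gives $a_{k,l+1}=(\gamma-1)a_{k,0}+a_{k,l}$ with $a_{k,0}=(n(\gamma-1)+1)^k$, whose solution is $a_{k,l}=(l(\gamma-1)+1)(n(\gamma-1)+1)^k$ and which telescopes correctly across the cycle boundary $a_{k,n}=a_{k+1,0}$; setting $\gamma-1=1$ recovers the $\gamma\ge2$ exponent $(n+1)^k(l+1)$. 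The $\eta_t\to0$ sharpenings follow because in that case the inexact constant in the recursion is $o(1)$, turning each ``$\le$'' into ``$o(\cdot)$''. I expect the main obstacle to be the $\gamma\in(1,2)$ fusion step: forcing the mixed-degree per-step bound to collapse exactly into $r_{t_0}^{\gamma-1}r_t$ with the stated $\theta_2$, rather than a looser $\theta_3$-type constant, is where the constants genuinely have to be managed, and it is the only place where the invariant $r_{t_0}\le1$ enters in an essential, rate-determining way.
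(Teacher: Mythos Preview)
Your plan is correct and mirrors the paper's proof: the three local estimates (i)--(iii) are precisely Lemmas~\ref{Lemma:inexact-local-grad} and~\ref{Lemma:inexact-local-resd} together with the chord estimate~\eqref{inequality-exact-local-third}, the coarse $1/2$-contraction and invariant $r_t\le 1$ are Lemma~\ref{Lemma:inexact-local-stay}, and the $\gamma$-case split---including the power-mean fusion with $\theta_2$ for $\gamma\in(1,2)$ and the $n$-periodic unrolling $a_{k,l}=(l(\gamma-1)+1)(n(\gamma-1)+1)^k$---is exactly the paper's main argument. One small correction: in the composite setting the chord bound (iii) cannot be read off from ``$\|H_t^{-1}\|\le 1/m$ applied to the integral residual'' since the exact step is a scaled prox rather than a linear solve; the paper obtains the same $1/m$ factor via monotonicity of $\partial r$ (see~\eqref{inequality-exact-local-first}), and likewise your constant in (i) should be $2\ell$ rather than $\ell$ (Lemma~\ref{Lemma:inexact-local-grad}), which is what makes the stated $\theta_1,\theta_2,\theta_3$ come out exactly.
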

\begin{remark}
Using Theorem~\ref{Theorem:local-exact-main}, we derive that the order of convergence is at least 
\begin{equation*}
\left\{\begin{array}{lcl}
1 & & \gamma=1 \ \text{and} \ \eta_t \nrightarrow 0, \\
> 1 & & \gamma=1 \ \text{and} \ \eta_t \rightarrow 0, \\
\sqrt[n]{n(\gamma-1)+1} & \text{If} & \gamma \in (1, 2) \ \text{and} \ \eta_t \nrightarrow 0, \\
> \sqrt[n]{n(\gamma-1)+1} & & \gamma \in (1, 2) \ \text{and} \ \eta_t \rightarrow 0, \\
\sqrt[n]{n+1} & & \gamma \geq 2. 
\end{array}\right.
\end{equation*}
The first two settings are trivial and the last setting is the same as that analyzed in Remark~\ref{Remark:local-exact-main}. For the third setting, $a_t = (l(\gamma-1)+1)(n(\gamma-1)+1)^k$ for $t = nk+l$ in Definition~\ref{Def:order-convergence} and we have
\begin{align*}
\lim_{t \rightarrow +\infty} \frac{\log(a_t)}{t} & = \lim_{k \rightarrow +\infty} \frac{k\log(n(\gamma-1)+1) + \log(l(\gamma-1)+1)}{nk+l} \\ 
& = \log(\sqrt[n]{n(\gamma-1)+1}). 
\end{align*}
This implies the desired result on the order of convergence in the third and fourth settings. 
\end{remark}
Before proceeding to the empirical part, we provide a practical approach to choose $\eta_t > 0$. The similar strategy has been designed for an inexact Newton algorithm before; see~\cite{Eisenstat-1996-Choosing}. 
\begin{theorem}\label{Theorem:inexact-step-main}
Assumption~\ref{Assumption:objective-convex-main}-\ref{Assumption:regular-main} and let $\{\x_t\}_{t\geq 0}$ be generated by Algorithm~\ref{Algorithm:IPSA} with
\begin{equation}\label{criterion:stepsize-rule}
\eta_t = \min\{\bar{\eta}, \|\widehat{G}(\x_t, \x_{t-1}, H_t) - G(\x_t)\|/\|G(\x_{t-1})\|\},  
\end{equation}
where $\bar{\eta} \in (0, m/(16\ell))$ and $H \in \br^{d \times d}$ is used for updating $\x_{t+1}$. If $\x_0$ satisfies Eq.~\eqref{condition:local-inexact-main}, then $\eta_t \rightarrow 0$ as $t \rightarrow +\infty$. 
\end{theorem}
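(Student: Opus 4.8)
The plan is to reduce the claim to showing that the second argument of the minimum in~\eqref{criterion:stepsize-rule} tends to zero. Writing $s_t := \|\widehat{G}(\x_t, \x_{t-1}, H_t) - G(\x_t)\|/\|G(\x_{t-1})\|$, we have $\eta_t = \min\{\bar{\eta}, s_t\}$, so it suffices to prove $s_t \to 0$ along the tail of indices for which $\x_{t-1} \neq \x^*$ (the remaining indices correspond to exact convergence and are irrelevant to the limit). First I would use that the rule forces $\eta_t \leq \bar{\eta} < m/(16\ell)$, so the hypotheses of Theorem~\ref{Theorem:local-inexact-main} are in force; consequently $r_t \leq 1$ for all $t$ and $\x_t \to \x^*$ in every regime of $\gamma$. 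Since the recursions underlying Theorem~\ref{Theorem:local-inexact-main} yield at least linear decrease, one also gets $\|\x_t - \x_{t-1}\| \leq C\|\x_{t-1} - \x^*\|$ for a constant $C$ and all large $t$.

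Next I would bound the numerator of $s_t$. The quantities $\widehat{G}(\x_t, \x_{t-1}, H_t)$ and $G(\x_t)$ differ only through a single evaluation of $\prox_{r/\ell}$, and the outer term $\ell\x_t$ cancels; using that $\prox_{r/\ell}$ is nonexpansive, the factor $\ell$ cancels as well and yields $\|\widehat{G}(\x_t, \x_{t-1}, H_t) - G(\x_t)\| \leq \|\nabla f(\x_t) - \nabla f(\x_{t-1}) - H_t(\x_t - \x_{t-1})\|$. Writing $H_t = \nabla^2 f(\x_\tau)$ for the most recent refresh index $\tau = n\lfloor t/n\rfloor$ and adding and subtracting $\nabla^2 f(\x_{t-1})(\x_t - \x_{t-1})$, the $\rho$-Hessian-Lipschitz property of Assumption~\ref{Assumption:smooth-main} gives $\|\widehat{G}(\x_t, \x_{t-1}, H_t) - G(\x_t)\| \leq \tfrac{\rho}{2}\|\x_t - \x_{t-1}\|^2 + \rho\|\x_\tau - \x_{t-1}\|\,\|\x_t - \x_{t-1}\|$. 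Substituting the step bound, the numerator is $O(\|\x_{t-1} - \x^*\|^2) + O(\|\x_\tau - \x_{t-1}\|\,\|\x_{t-1} - \x^*\|)$.

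The crux is a matching lower bound on the denominator, namely a local error bound $\|G(\x)\| \geq c\|\x - \x^*\|$ valid for $\x$ in a neighborhood of $\x^*$ and some $c > 0$. This is where Assumption~\ref{Assumption:regular-main} enters: invertibility of $\nabla^2 f(\x^*)$ together with convexity forces $\nabla^2 f(\x^*) \succ 0$, so $F$ satisfies local quadratic growth at $\x^*$; this, with the firm nonexpansiveness of the proximal-gradient residual map $G$, yields the desired bound. The same estimate already underlies the local analysis of Theorem~\ref{Theorem:local-inexact-main}, so it can be quoted from there.

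Combining the three estimates gives $s_t \leq \tfrac{\rho C^2}{2c}\|\x_{t-1} - \x^*\| + \tfrac{\rho C}{c}\|\x_\tau - \x_{t-1}\|$. The first term vanishes because $\x_{t-1} \to \x^*$. For the second, note $\tau \in \{t-n, \ldots, t\}$, so both $\tau$ and $t-1$ tend to infinity with $t$; since $\{\x_t\}$ converges it is Cauchy, whence $\|\x_\tau - \x_{t-1}\| \to 0$. Therefore $s_t \to 0$ and $\eta_t = \min\{\bar{\eta}, s_t\} \to 0$, as claimed. I expect the main obstacle to be the denominator estimate: establishing the local error bound for the composite-gradient map $G$ rigorously from Assumption~\ref{Assumption:regular-main}, together with verifying that the Hessian-staleness term $\|\x_\tau - \x_{t-1}\|$ still tends to zero even though the refresh iterate $\x_\tau$ converges more slowly than $\x_{t-1}$.
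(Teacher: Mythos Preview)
Your proposal is correct and follows essentially the same route as the paper: invoke the linear contraction $\|\x_{t+1}-\x^*\|\le\tfrac12\|\x_t-\x^*\|$ from Lemma~\ref{Lemma:inexact-local-stay} (which underlies Theorem~\ref{Theorem:local-inexact-main}) to get the step bound, use nonexpansiveness of $\prox_{r/\ell}$ and $\rho$-Hessian Lipschitz continuity to bound the numerator by $\tfrac{\rho}{2}\|\x_t-\x_{t-1}\|^2+\rho\|\x_\tau-\x_{t-1}\|\|\x_t-\x_{t-1}\|$, and use the lower bound $\|G(\x_{t-1})\|\ge\tfrac{m}{2}\|\x_{t-1}-\x^*\|$ of Lemma~\ref{Lemma:inexact-local-grad} for the denominator. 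The only cosmetic difference is that the paper quotes Lemma~\ref{Lemma:inexact-local-grad} directly for the error bound rather than arguing via quadratic growth and firm nonexpansiveness, and it writes the refresh index as $t'$ rather than $\tau$; the resulting inequalities and the limit argument are identical.
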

\section{Experiments}\label{sec:results}
We present some empirical results on $\ell_1$-regularized Poisson and logistic regression problems with two real-world LIBSVM datasets\footnote{https://www.csie.ntu.edu.tw/$\sim$cjlin/libsvmtools/datasets}: \textsf{mnist} and \textsf{gisette}. The former one contains 70000 instances and 780 features, and the latter one has 7000 instances and 5000 features. Note these regression problems have been widely used for evaluating proximal Newton method~\cite{Lee-2014-Proximal} and proven in~\cite{Roosta-2019-Sub} to empirically satisfy Assumption~\ref{Assumption:objective-convex-main},~\ref{Assumption:smooth-main} and~\ref{Assumption:regular-main}. Assumption~\ref{Assumption:proximal-main} is also satisfied since the scaled proximal mapping of $\ell_1$-norm can be efficiently tackled by iterative solvers in the TFOCS package\footnote{http://cvxr.com/tfocs/}. 

\subsection{$\ell_1$-regularized Poisson regression}
We explore the effect of inexact search directions on the practical performance of Algorithm~\ref{Algorithm:IPSA} using $\ell_1$-regularized Poisson regression and the dataset \textsf{mnist}. In particular, the optimization model is 
\begin{equation}\label{prob:poisson-regression}
\min_{\x\in\br^d} \ \frac{1}{n}\sum_{i=1}^n (e^{\w_i^\top\x} - y_i\w_i^\top\x) + \mu\|\x\|_1. 
\end{equation}
where $\{(\w_i, y_i)\}_{i=1}^n$ are data samples with integer label and $\mu>0$ is chosen by five-fold cross validation. We select spectral gradient algorithm (SpaRSA)~\cite{Wright-2009-Sparse} to solve the subproblem and evaluate different stopping rules as follows,  
\begin{compactenum}
\item Solve the subproblem to high accuracy $10^{-4}$.  
\item Eq.~\eqref{criterion:stop-rule} with $\gamma=2$ and $\eta_t$ chosen in Eq.~\eqref{criterion:stepsize-rule}.
\item Eq.~\eqref{criterion:stop-rule} with $\gamma=1$ and $\eta_t$ chosen in Eq.~\eqref{criterion:stepsize-rule}.
\item Solve the subproblem with 5 maximum iterations. 
\end{compactenum}
Figure~\ref{fig:l1-possion} shows the performance of all methods on \textsf{covetype} and \textsf{mnist}. Algorithm~\ref{Algorithm:IPSA} with SpaRSA achieves the superlinear convergence under the first three stopping rules while behaving badly with 5 maximum iterations. This implies that the proper choice of $\gamma$ avoids the subproblem undersolving. The choice of $\gamma=1$ yields the fastest convergence in terms of time which is consistent with~\cite{Dembo-1982-Inexact, Lee-2014-Proximal} that the subproblem oversolving is impractical despite the theoretical guarantee. 
\begin{figure}[!t]
\centering
\includegraphics[scale=0.38]{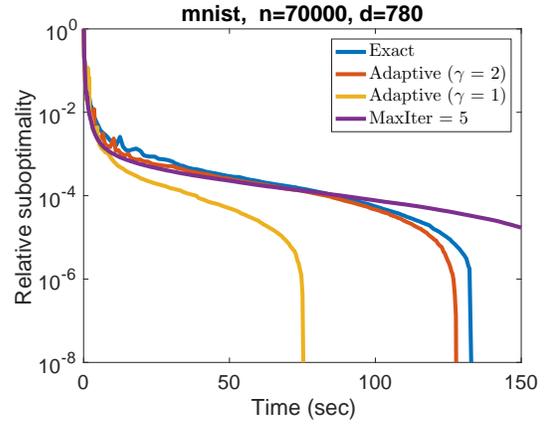}
\vspace*{-.5em}\caption{Performance of different stopping criteria on $\ell_1$-regularized Poisson regression and the dataset \textsf{mnist}.}\vspace*{-1em}\label{fig:l1-possion}
\end{figure}
\begin{figure}[!t]
\centering
\includegraphics[scale=0.38]{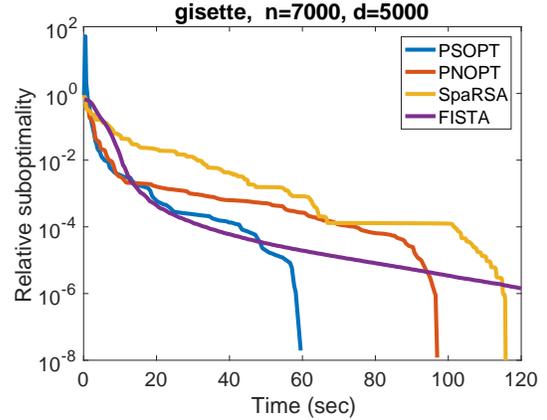}
\vspace*{-.5em}\caption{Performance of all methods on $\ell_1$-regularized logistic regression and the dataset \textsf{gisette}.}\vspace*{-2em}\label{fig:l1-logistic}
\end{figure}
\subsection{$\ell_1$-regularized logistic regression}
We compare our method, denoted by PSOPT, with other competing methods using $\ell_1$-regularized logistic regression. In particular, the optimization model is  
\begin{equation}\label{prob:logistic-regression}
\min_{\x\in\br^d} \ \frac{1}{n}\sum_{i=1}^n \log\left(1+e^{-y_i \w_i^\top\x}\right) + \mu\left\|\x\right\|_1. 
\end{equation}
where $\{(\w_i, y_i)\}_{i=1}^n$ are data samples with binary label and $\mu>0$ is chosen by five-fold cross validation. We apply the heuristics in~\cite{Lee-2014-Proximal} by constructing $H_t$ with L-BFGS update and solving the subproblem inexactly with the TFOCS package. The baseline methods include two first-order methods in the TFCOS package, i.e., SpaRSA and FISTA, and the default method in the PNOPT package\footnote{https://web.stanford.edu/group/SOL/software/pnopt/}. For our method, we set $n=3$ and the memory size $L=50$. For other methods, we use the default setting in PNOPT and TFCOS packages. 

Figure~\ref{fig:l1-logistic} shows the performance of all methods on \textsf{gisette}. Both our method and PNOPT reach high-accurate solution with much faster convergence rate than SpaRSA and FISTA and our method performs the best in terms of the relative suboptimality v.s. time. This demonstrates that our method attains fast convergence rate while keeping relatively cheap per-iteration cost and serves as a better candidate solution than standard proximal Newton-type methods sometimes. 

\section{Conclusions}\label{sec:conclusions}
Compared to the existing proximal Newton methods, our newly proposed methods are better generalizations of first-order methods that account for the curvature information while not sacrificing per-iteration computational cost too much. Experiments on real-world datasets demonstrate their effectiveness and efficiency. Future work includes studying new interior-point methods with better complexity bound.
\section*{APPENDIX}
\subsection{Proof of Theorem~\ref{theorem:generic-global}}
\begin{lemma} \label{lemma:generic-descent}
Under Assumption~\ref{Assumption:objective-convex-main}-\ref{Assumption:proximal-main}, the sufficient descent criterion is satisfied for some $\alpha \in (0, \min\{1, 3m/2\ell\})$. It also holds for all $t \geq 0$ that $F(\x_{t+1}) - F(\x_t) \leq -(m\alpha/4)\|\Delta\x_t\|^2$. 
\end{lemma}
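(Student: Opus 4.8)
The plan is to first extract a usable descent quantity from the first-order optimality of the proximal subproblem, then bound the actual decrease of $F$ along the search direction using the descent lemma for $f$ together with convexity of $r$, and finally combine the two to verify the sufficient descent criterion on the claimed range of stepsizes.

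First I would rewrite the subproblem defining $\x'$ as $\min_\z \{q(\z) + r(\z)\}$ with $q(\z) = (\z-\x_t)^\top\nabla f(\x_t) + \frac{1}{2}(\z-\x_t)^\top H_t(\z-\x_t)$, using $H_t = RR^\top$ so that $\|R(\z-\x_t)\|^2 = (\z-\x_t)^\top H_t(\z-\x_t)$. The optimality condition $0 \in \nabla q(\x') + \partial r(\x')$ then produces a subgradient $\xi = -\nabla f(\x_t) - H_t\Delta\x_t \in \partial r(\x')$. Feeding this $\xi$ into the convexity inequality for $r$ from Assumption~\ref{Assumption:objective-convex-main} at the pair $(\x', \x_t)$ gives $r(\x') - r(\x_t) \leq \Delta\x_t^\top\xi = -\Delta\x_t^\top\nabla f(\x_t) - \Delta\x_t^\top H_t\Delta\x_t$. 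Abbreviating the bracketed quantity in the sufficient descent criterion by $D_t := \Delta\x_t^\top\nabla f(\x_t) + r(\x') - r(\x_t)$, this rearranges to the key inequality $D_t \leq -\Delta\x_t^\top H_t\Delta\x_t \leq -m\|\Delta\x_t\|^2$, where the last step invokes $H_t \succeq mI$.

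Next I would bound $F(\x_t + \alpha\Delta\x_t) - F(\x_t)$ for $\alpha \in (0,1]$. Splitting $F = f + r$, the descent lemma coming from $\ell$-gradient Lipschitzness of $f$ (Assumption~\ref{Assumption:smooth-main}) yields $f(\x_t+\alpha\Delta\x_t) \leq f(\x_t) + \alpha\Delta\x_t^\top\nabla f(\x_t) + \frac{\ell\alpha^2}{2}\|\Delta\x_t\|^2$, while convexity of $r$ along the segment $\x_t + \alpha\Delta\x_t = (1-\alpha)\x_t + \alpha\x'$ gives $r(\x_t+\alpha\Delta\x_t) \leq (1-\alpha)r(\x_t) + \alpha r(\x')$. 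Adding these produces $F(\x_t+\alpha\Delta\x_t) - F(\x_t) \leq \alpha D_t + \frac{\ell\alpha^2}{2}\|\Delta\x_t\|^2$.

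Finally, to verify the criterion $F(\x_t+\alpha\Delta\x_t) - F(\x_t) \leq \frac{\alpha}{4}D_t$ it suffices to force $\frac{3\alpha}{4}D_t + \frac{\ell\alpha^2}{2}\|\Delta\x_t\|^2 \leq 0$; dividing by $\alpha$ and using $D_t \leq -m\|\Delta\x_t\|^2$ reduces this to $(-\frac{3m}{4} + \frac{\ell\alpha}{2})\|\Delta\x_t\|^2 \leq 0$, which holds exactly when $\alpha \leq \frac{3m}{2\ell}$. Hence every $\alpha \leq \min\{1, \frac{3m}{2\ell}\}$ satisfies the criterion, which in particular exhibits an admissible $\alpha$ in the stated open interval and guarantees that the backtracking line search terminates. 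For the second claim, I let $\alpha$ denote the stepsize actually accepted by the line search; by construction it satisfies the criterion, so substituting it back and again using $D_t \leq -m\|\Delta\x_t\|^2$ gives $F(\x_{t+1}) - F(\x_t) \leq \frac{\alpha}{4}D_t \leq -\frac{m\alpha}{4}\|\Delta\x_t\|^2$. The only delicate point I anticipate is the first step, namely correctly reading the subgradient $\xi$ off the subproblem's optimality condition and matching the quadratic term to $\Delta\x_t^\top H_t\Delta\x_t$ through $H_t = RR^\top$; once $D_t \leq -m\|\Delta\x_t\|^2$ is in hand, the remainder is routine line-search bookkeeping.
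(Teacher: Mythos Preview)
Your proof is correct and follows essentially the same route as the paper's: both use the descent lemma for $f$, convexity of $r$ along the segment, and the key inequality $D_t \leq -\Delta\x_t^\top H_t\Delta\x_t \leq -m\|\Delta\x_t\|^2$ to force the sufficient descent condition for $\alpha \leq \min\{1, 3m/2\ell\}$. The only difference is cosmetic: you derive $D_t \leq -\Delta\x_t^\top H_t\Delta\x_t$ directly from the subproblem's first-order optimality condition and the subgradient inequality for $r$, whereas the paper simply invokes \cite[Proposition~2.4]{Lee-2014-Proximal} for that step.
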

\begin{proof}
Let $\Delta\x =\x' - \x_t$; since $f$ is $\ell$-gradient Lipschitz, $r$ is convex and $\alpha \in (0, 1]$, we have
\begin{align*}
f(\x_t + \alpha\Delta\x_t) - f(\x_t) & \leq \alpha(\Delta\x_t)^\top\nabla f(\x_t) + \frac{\ell\alpha^2\|\Delta\x_t\|^2}{2}, \\
r(\x_t + \alpha\Delta\x_t) - r(\x_t) & \leq \alpha(r(\x_t + \Delta\x_t) - r(\x_t)). 
\end{align*}
We denote $\lambda_t = (\Delta\x_t)^\top\nabla f(\x_t) + r(\x_t + \Delta\x_t) - r(\x_t)$ for the simplicity and derive from the definition of $F$ and $\x_{t+1}$ that
\begin{equation*}
F(\x_{t+1}) - F(\x_t) \leq \alpha\lambda_t + \ell\alpha^2\|\Delta\x_t\|^2/2. 
\end{equation*}
Using the update of $\x'$ and \cite[Proposition~2.4]{Lee-2014-Proximal}, we have $\lambda_t \leq -\Delta\x_t^\top H \Delta\x_t$. Since $H_t \succeq mI$ for some $m > 0$, we have $\lambda_t \leq -m\|\Delta\x_t\|^2$. Putting these pieces together yields that 
\begin{equation*}
F(\x_{t+1}) - F(\x_t) \leq \alpha\lambda_t/4 + (\ell\alpha^2/2 - 3m\alpha/4)\|\Delta\x_t\|^2
\end{equation*}
Therefore, we conclude that $F(\x_{t+1}) - F(\x_t) \leq \alpha\lambda_t/4$ for some $\alpha \in (0, \min\{1, 3m/2\ell\})$. Since $\lambda_t \leq -m\|\Delta\x_t\|^2$ for all $t \geq 0$, we conclude the desired results.  
\end{proof}
\textbf{Proof of Theorem~\ref{theorem:generic-global}:} Using Lemma~\ref{lemma:generic-descent}, we have $F(\x_{t+1}) \leq F(\x_t) - (m\alpha/4)\|\Delta\x_t\|^2$. Then we claim that there exists $\underline{\alpha}>0$ such that $\alpha \geq \underline{\alpha}$ in Algorithm~\ref{Algorithm:PSA}. Indeed, the claim is valid for $\underline{\alpha} = \min\{1/2, 3m/4\ell\} > 0$. This can be shown by the standard arguments for the backtracking line search; see~\cite{Boyd-2004-Convex}. Putting these pieces together yields that
\begin{equation}\label{theorem-inequality-descent-first}
F(\x_{t+1}) \leq F(\x_t) - (m\underline{\alpha}/4)\|\Delta\x_t\|^2. 
\end{equation}
Summing up~\eqref{theorem-inequality-descent-first} over $t = 0, 1, \ldots$ yields that 
\begin{equation*}
0 \leq (m\underline{\alpha}/4)\left(\sum_{t=0}^{+\infty} \|\Delta\x_t\|^2\right) \leq F(\x_0) - F(\x_{T+1}).  
\end{equation*}
This implies that $\sum_{t=0}^\infty \|\Delta\x_t\|^2 \leq (4/m\underline{\alpha})(F(\x_0) - F(\x^*))$. Therefore, we conclude that $\|\Delta\x_t\| \rightarrow 0$ as $t \rightarrow +\infty$. Then it suffices to show that $\x_t$ is optimal if and only if $\|\Delta\x_t\|=0$. Indeed, if $\|\Delta\x_t\|=0$, the optimality of proximal Newton step implies that $-\nabla f(\x_t) \in \partial r(\x_t)$. Thus, $\x_t$ is optimal. Conversely, if $\|\Delta\x_t\| \neq 0$, $\Delta\x_t$ is a descent direction for $F$ at $\x_t$ and $\x_t$ is not optimal. This completes the proof. 

\subsection{Proof of Theorem~\ref{Theorem:local-exact-main}}
We first show that $\alpha=1$ is satisfied by sufficient descent criterion for sufficiently large $t$. Since $f$ is $\rho$-Hessian Lipschitz and $H_t = \nabla^2 f(\x_t)$, we have
{\small \begin{equation*}
f(\x_t + \Delta\x_t) - f(\x_t) \leq (\Delta\x_t)^\top\nabla f(\x_t) + \frac{(\Delta\x_t)^\top H_t\Delta\x_t}{2} + \frac{\rho\|\Delta\x_t\|^3}{6}.  
\end{equation*}}
We denote $\lambda_t = (\Delta\x_t)^\top\nabla f(\x_t) + r(\x_t + \Delta\x_t) - r(\x_t)$ and note $\lambda_t \leq -\Delta\x_t^\top H_t \Delta\x_t$. Putting these pieces together yields that 
\begin{equation*}
F(\x_t + \Delta\x_t) - F(\x_t) \leq \lambda_t/2 + (\rho/6)\|\Delta\x_t\|^3.  
\end{equation*}
For sufficiently large $t$, $\x_t$ is sufficiently close to $\x^*$ such that $\|\x_t - \x^*\| \leq \delta$. From Assumption~\ref{Assumption:regular-main}, $H_t = \nabla^2 f(\x_t) \succeq mI$. This implies that $\lambda_t \leq -m\|\Delta\x_t\|^2$. Therefore, we conclude that 
\begin{equation*}
F(\x_t + \Delta\x_t) - F(\x_t) \leq \lambda_t(1/2 - (\rho/6m)\|\Delta\x_t\|). 
\end{equation*}
Therefore, we have $F(\x_t + \Delta\x_t) - F(\x_t) \leq \lambda_t/4$ if $\|\Delta\x_t\| \leq 3m/2\rho$. Since $\|\Delta\x_t\| \rightarrow 0$, we conclude the desired result.  

In what follows, we assume that $\alpha = 1$ and $\|\x_0 - \x^*\| \leq \{\delta, 2m/3\rho\}$. By the definition of $\x^*$ and $\x_{t+1}$, we have $-\nabla f(\x^*) \in \partial r(\x^*)$ and  
\begin{equation*}
-\nabla f(\x_t) - \nabla^2 f(\x_{t'})(\x_{t+1} - \x_t) \in \partial r(\x_{t+1}). 
\end{equation*}
where $t' \ \text{mod} \ n = 0$ and $0 \leq t - t' \leq n-1$. By the convexity of $r$, we have $(\x_{t+1} - \x^*)^\top(\nabla f(\x^*) - \nabla f(\x_t) - \nabla^2 f(\x_{t'})(\x_{t+1} - \x_t)) \geq 0$. Equivalently, we have
\begin{eqnarray*}
& & (\x_{t+1} - \x^*)^\top(\nabla f(\x^*) - \nabla f(\x_t) - \nabla^2 f(\x_{t'})(\x^* - \x_t)) \\ 
& \geq & (\x_{t+1} - \x^*)^\top\nabla^2 f(\x_{t'})(\x_{t+1} - \x^*). 
\end{eqnarray*}
In what follows, we prove $\|\x_t - \x^*\| \leq \min\{\delta, 2m/3\rho\}$ for all $t \geq 0$ using the induction. It is trivial when $t=0$. Assume that $\|\x_j - \x^*\| \leq \min\{\delta, 2m/3\rho\}$ for all $j \leq t$, Assumption~\ref{Assumption:regular-main} implies that $\nabla^2 f(\x_j) \succeq m I$ for all $j \leq t$. So $(\x_{t+1} - \x^*)^\top\nabla^2 f(\x_{t'})(\x_{t+1} - \x^*) \geq m\|\x_{t+1} - \x^*\|^2$. Putting these pieces together yields that 
\begin{equation}\label{inequality-exact-local-first}
m\|\x_{t+1} - \x^*\| \leq \|\nabla f(\x^*) - \nabla f(\x_t) - \nabla^2 f(\x_{t'})(\x^* - \x_t)\|. 
\end{equation}
Since $f$ is $\rho$-Hessian Lipschitz, we have
\begin{eqnarray}\label{inequality-exact-local-second}
& & \|\nabla f(\x^*) - \nabla f(\x_t) - \nabla^2 f(\x_{t'})(\x^* - \x_t)\| \nonumber \\
& = & \left\|\int_0^1 (\nabla^2 f(\x^* + s(\x_t - \x^*)) - \nabla^2 f(\x_{t'}))(\x_t - \x^*) ds\right\| \nonumber \\
& \leq & \|\x_t - \x^*\|\int_0^1 \|\nabla^2 f(\x^* + s(\x_T - \x^*)) - \nabla^2 f(\x_{t'})\| ds.  \nonumber \\
& \leq & \rho\|\x_t - \x^*\|(\|\x_{t'} - \x^*\| + \|\x_t - \x^*\|/2). 
\end{eqnarray}
Combining Eq.~\eqref{inequality-exact-local-first} and Eq.~\eqref{inequality-exact-local-second}, we have
\begin{equation}\label{inequality-exact-local-third}\small
\|\x_{t+1} - \x^*\| \leq (\rho/m)\|\x_t - \x^*\|(\|\x_{t'} - \x^*\| + \|\x_t - \x^*\|/2). 
\end{equation}
By induction, we have $\|\x_{t+1} - \x^*\| \leq \|\x_t - \x^*\|$ and hence $\|\x_{t+1} - \x^*\| \leq \min\{\delta, 2m/3\rho\}$. By the definition of $r_t$, we conclude that $r_t \leq 1$ for all $t \geq 0$. 

Finally, we show that $r_t \leq r_0^{(n+1)^k(l+1)}$ for $t = nk+l$ where $k \geq 0$ and $0 \leq l \leq n-1$ are integers. Indeed, from the previous analysis, we have $\|\x_t - \x^*\| \leq \|\x_{t'} - \x^*\|$ since $t \geq t'$. This together with Eq.~\eqref{inequality-exact-local-third} implies that 
\begin{equation*}
\|\x_{t+1} - \x^*\| \leq (3\rho/2m)\|\x_t - \x^*\|\|\x_{t'} - \x^*\|. 
\end{equation*}
where $t' \ \text{mod} \ n = 0$ and $0 \leq t - t' \leq n-1$. Equivalently, we have $r_{t+1} \leq r_t r_{t'}$. It suffices to prove the desired result using the induction. Indeed, it holds trivially when $t=0$. Let $t = nk+l$ be an integer for some $k \geq 0$ and $0 \leq l \leq n-1$ such that $r_t \leq r_0^{(l+1)(n+1)^k}$. We consider two cases: (i) if $t = nk$, then $t' = t$ and $r_{t+1} \leq r_t r_{t'} = r_t^2 \leq r_0^{2(n+1)^k}$; (ii) if $t = nk+l$ for $0 < l \leq n-1$, then $t' = nk$ and $r_{t+1} \leq r_t r_{t'} = r_t r_{nk} \leq r_0^{(l+2)(n+1)^k}$. This completes the proof. 

\subsection{Proof of Theorem~\ref{Theorem:local-quasi-main}}
We first show that $\alpha=1$ is satisfied by sufficient descent criterion for sufficiently large $t$ satisfying $t \ \text{mod} \ n = 0$. Using the same argument as in the proof of Theorem~\ref{Theorem:local-exact-main}, we have
\begin{align*}
& F(\x_t + \Delta\x_t) - F(\x_t) \\
\leq & \lambda_t(1/2 - (\rho/6m)\|\Delta\x_t\|) + \Delta\x_t^\top(\nabla^2 f(\x_t) - H_t)\Delta\x_t/2 \\
\leq & \lambda_t(1/2 - (\rho/6m)\|\Delta\x_t\|) + \|\Delta\x_t\|^2\|\nabla^2 f(\x_t) - H_t\|/2. 
\end{align*}
Using the same argument in the proof of Theorem~\ref{theorem:generic-global}, we have $\lambda_t \leq -m\|\Delta\x_t\|^2$. Putting these pieces together yields that 
{\small \begin{equation*}
F(\x_t + \Delta\x_t) - F(\x_t) \leq \lambda_t(1/2 - (\rho/6m)\|\Delta\x_t\| - \|\nabla^2 f(\x_t) - H_t\|/2m).
\end{equation*}}
Since $\x_t$ converges to $\x^*$, $\|\Delta\x_t\| \rightarrow 0$ and $\|H_t - \nabla^2 f(\x_t)\| = o(1)$ holds for all $t \ \text{mod} \ n = 0$, the following inequality holds for sufficiently large $t$: 
\begin{equation*}
(\rho/6m)\|\Delta\x_t\| + \|\nabla^2 f(\x_t) - H_t\|/2m \leq 1/4. 
\end{equation*}
Therefore, $F(\x_t + \Delta\x_t) - F(\x_t) \leq \lambda_t/4$ for sufficiently large $t$ and $t \ \text{mod} \ n = 0$ and we conclude the desired result.

In what follows, we assume that $\x_0$ is sufficiently close to $\x^*$. Indeed, since $\x_0$ is sufficiently close to $\x^*$ and $F(\x_t)$ is a non-increasing sequence, $\{\x \in \br^d: F(\x) \leq F(\x_0)\}$ is contained in $\BB_\delta(\x^*)$ and Assumption~\ref{Assumption:regular-main} implies that $\nabla^2 f(\x_t) \succeq mI_d$ for all $t \geq 0$. By the definition of $\x_{t+1}$, we have
\begin{equation*}
-\nabla f(\x_t) - H_{t'}(\x_{t+1} - \x_t) \in \partial r(\x_{t+1}). 
\end{equation*}
where $t' \ \text{mod} \ n = 0$ and $0 \leq t - t' \leq n-1$. Using the same argument as in the proof of Theorem~\ref{Theorem:local-exact-main}, we derive the analogue of Eq.~\eqref{inequality-exact-local-first} as follows, 
\begin{eqnarray}
m\|\x_{t+1} - \x^*\| & \leq & \|\nabla f(\x^*) - \nabla f(\x_t) - \nabla^2 f(\x_{t'})(\x^* - \x_t)\| \nonumber \\ 
& & + \|(\nabla^2 f(\x_{t'}) - H_{t'})(\x_{t+1} - \x_t)\|. 
\end{eqnarray}
Using Eq.~\eqref{inequality-exact-local-second}, we have
\begin{eqnarray}\label{inequality-quasi-local-first}
\|\x_{t+1} - \x^*\| & \leq & (\rho/m)\|\x_t - \x^*\|(\|\x_{t'} - \x^*\| + \|\x_t - \x^*\|/2)  \nonumber \\
& & + \|(\nabla^2 f(\x_{t'}) - H_{t'})(\x_{t+1} - \x_t)\|/m. 
\end{eqnarray}
Since $\|H_t - \nabla^2 f(\x_t)\| = o(1)$ and $t' \rightarrow +\infty$ as $t \rightarrow +\infty$, we have $\|\nabla^2 f(\x_{t'}) - H_{t'}\| \leq m/2$ after $t$ is sufficiently large. Then
\begin{eqnarray}\label{inequality-quasi-local-second}
& & \|(\nabla^2 f(\x_{t'}) - H_{t'})(\x_{t+1} - \x_t)\|/m \\
& \leq & (\|\nabla^2 f(\x_{t'}) - H_{t'}\|/m)(\|\x_{t+1} - \x^*\| + \|\x_t - \x^*\|) \nonumber \\
& \leq & \|\x_{t+1} - \x^*\|/2 + \|\nabla^2 f(\x_{t'}) - H_{t'}\|\|\x_t - \x^*\|/m. \nonumber. 
\end{eqnarray}
Plugging Eq.~\eqref{inequality-quasi-local-second} into Eq.~\eqref{inequality-quasi-local-first}, we have
\begin{eqnarray*}
\|\x_{t+1} - \x^*\|/2 & \leq & (\rho/m)\|\x_t - \x^*\|(\|\x_{t'} - \x^*\| + \|\x_t - \x^*\|/2)  \nonumber \\
& & + \|\nabla^2 f(\x_{t'}) - H_{t'}\|\|\x_t - \x^*\|/m. 
\end{eqnarray*}
Since $\x_t, \x_{t'} \rightarrow \x^*$ and $\|\nabla^2 f(\x_{t'}) - H_{t'}\| \rightarrow 0$ as $t \rightarrow +\infty$, we have $\|\x_{t+1} - \x^*\| = o(\|\x_t - \x^*\|)$. This completes the proof. 

\subsection{Proof of Theorem~\ref{Theorem:local-inexact-main}}
\begin{lemma}\label{Lemma:inexact-local-grad}
Under Assumption~\ref{Assumption:objective-convex-main}-\ref{Assumption:regular-main} and let $\{\x_t\}_{t\geq 0}$ be generated by Algorithm~\ref{Algorithm:IPSA} with $\|\x_j - \x^*\| \leq \delta$ for all $j \leq t$. Then $(m/2)\|\x_t - \x^*\| \leq \|G(\x_t)\| \leq 2\ell\|\x_t - \x^*\|$. 
\end{lemma}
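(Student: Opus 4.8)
The plan is to prove the two inequalities separately, both anchored on the observation that $\x^*$ is a fixed point of the composite-gradient map, i.e.\ $G(\x^*) = 0$. This is exactly the optimality condition $-\nabla f(\x^*) \in \partial r(\x^*)$ rewritten through the proximal map, and it lets me treat $G(\x_t)$ as measuring the displacement of $\x_t$ from $\x^*$ under a nonexpansive operator.

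For the upper bound $\|G(\x_t)\| \leq 2\ell\|\x_t - \x^*\|$, I would first use $G(\x^*)=0$ to write $G(\x_t)/\ell = (\x_t - \x^*) - [\prox_{r/\ell}(\x_t - \nabla f(\x_t)/\ell) - \prox_{r/\ell}(\x^* - \nabla f(\x^*)/\ell)]$ and then apply the triangle inequality. The first term contributes $\|\x_t - \x^*\|$. For the bracketed term I would invoke nonexpansiveness of $\prox_{r/\ell}$ together with nonexpansiveness of the gradient step $\x \mapsto \x - \nabla f(\x)/\ell$; the latter follows from co-coercivity of $\nabla f$ for an $\ell$-smooth convex function (Assumption~\ref{Assumption:smooth-main}), since expanding $\|(\x - \nabla f(\x)/\ell) - (\y - \nabla f(\y)/\ell)\|^2$ and applying co-coercivity leaves $\|\x-\y\|^2 - \ell^{-2}\|\nabla f(\x) - \nabla f(\y)\|^2 \leq \|\x-\y\|^2$. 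This bounds the bracketed term by $\|\x_t - \x^*\|$ as well, and summing yields the factor $2$.

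For the lower bound $(m/2)\|\x_t - \x^*\| \leq \|G(\x_t)\|$, I set $\x_t^+ = \x_t - G(\x_t)/\ell$ and read off the optimality condition of the proximal subproblem defining it, namely $G(\x_t) - \nabla f(\x_t) \in \partial r(\x_t^+)$. Pairing this with $-\nabla f(\x^*) \in \partial r(\x^*)$ and using monotonicity of $\partial r$ gives $(\x_t^+ - \x^*)^\top[G(\x_t) - (\nabla f(\x_t) - \nabla f(\x^*))] \geq 0$. Substituting $\x_t^+ = \x_t - G(\x_t)/\ell$ and rearranging isolates the monotonicity quantity $(\x_t - \x^*)^\top(\nabla f(\x_t) - \nabla f(\x^*))$, which I lower-bound by $m\|\x_t - \x^*\|^2$; the leftover cross terms I bound above by $\|\x_t - \x^*\|\,\|G(\x_t)\|$ via Cauchy--Schwarz and the $\ell$-Lipschitz gradient, while discarding the nonpositive $-\|G(\x_t)\|^2/\ell$ term. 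The result is $m\|\x_t - \x^*\|^2 \leq 2\|G(\x_t)\|\,\|\x_t - \x^*\|$, and dividing (the case $\x_t = \x^*$ being trivial) gives the claim.

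The main obstacle is the lower bound, and within it the local strong convexity estimate $(\x_t - \x^*)^\top(\nabla f(\x_t) - \nabla f(\x^*)) \geq m\|\x_t - \x^*\|^2$. This does not follow from Assumption~\ref{Assumption:smooth-main} alone: it requires $\nabla^2 f(\x) \succeq mI$ on $\BB_\delta(\x^*)$, which I obtain from Assumption~\ref{Assumption:regular-main} (invertibility of $\nabla^2 f(\x^*)$) together with $\rho$-Lipschitz continuity of the Hessian, choosing $\delta$ small enough that the smallest eigenvalue stays above $m$ throughout the ball. Since the hypothesis imposes $\|\x_j - \x^*\| \leq \delta$ and $\BB_\delta(\x^*)$ is convex, the segment $[\x^*, \x_t]$ lies in the ball, so the integral representation of $\nabla f(\x_t) - \nabla f(\x^*)$ transfers this eigenvalue bound to the desired monotonicity inequality.
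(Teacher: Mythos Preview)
Your proposal is correct. The lower-bound argument is essentially identical to the paper's: both set up the subgradient inclusions $-\nabla f(\x^*)\in\partial r(\x^*)$ and $G(\x_t)-\nabla f(\x_t)\in\partial r(\x_t-G(\x_t)/\ell)$, invoke monotonicity of $\partial r$, use the local strong-convexity estimate $(\x_t-\x^*)^\top(\nabla f(\x_t)-\nabla f(\x^*))\ge m\|\x_t-\x^*\|^2$ coming from Assumption~\ref{Assumption:regular-main} on $\BB_\delta(\x^*)$, and bound the cross terms by Cauchy--Schwarz and the $\ell$-Lipschitz gradient.

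Where you diverge is the upper bound. You prove it separately via nonexpansiveness of $\prox_{r/\ell}$ and of the gradient step $\x\mapsto\x-\nabla f(\x)/\ell$, which is clean and has the minor advantage of being global (it uses only convexity and $\ell$-smoothness, not the local $m$). The paper instead retains the term $\|G(\x_t)\|^2/\ell$ that you discard, arriving at the single two-sided inequality
\[
\|G(\x_t)\|^2/\ell + m\|\x_t - \x^*\|^2 \;\le\; 2\|\x_t - \x^*\|\,\|G(\x_t)\|,
\]
from which both the upper bound $\|G(\x_t)\|\le 2\ell\|\x_t-\x^*\|$ and the lower bound $(m/2)\|\x_t-\x^*\|\le\|G(\x_t)\|$ drop out immediately. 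So the paper's route is slightly more economical (one inequality, two conclusions), while yours decouples the two halves and thereby shows that the upper bound does not actually require the local curvature hypothesis.
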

\begin{proof}
By the definition of $G(\cdot)$ and $\x^*$, we have $-\nabla f(\x^*) \in \partial r(\x^*)$ and $G(\x_t) - \nabla f(\x_t) \in \partial r(\x_t - G(\x_t)/\ell)$. By the convexity of $r$, we have
\begin{equation*}
(\x_t - \x^* - G(\x_t)/\ell)^\top(G(\x_t) - \nabla f(\x_t) + \nabla f(\x^*)) \geq 0.
\end{equation*}
which implies that 
\begin{eqnarray}\label{inequality-inexact-local-grad-first}
& & (\x_t - \x^*)^\top G(\x_t) + G(\x_t)^\top(\nabla f(\x_t) - \nabla f(\x^*))/\ell \nonumber \\ 
& \geq & \|G(\x_t)\|^2/\ell + (\x_t - \x^*)^\top(\nabla f(\x_t) - \nabla f(\x^*)). 
\end{eqnarray}
Since $\|\x_j - \x^*\| \leq \delta$ for all $j \leq t$, Assumption~\ref{Assumption:regular-main} implies that 
\begin{equation}\label{inequality-inexact-local-grad-second}
(\x_t - \x^*)^\top(\nabla f(\x_t) - \nabla f(\x^*)) \geq m\|\x_t - \x^*\|^2. 
\end{equation}
Since $f$ is $\ell$-gradient Lipschitz, we have
\begin{equation}\label{inequality-inexact-local-grad-third}
G(\x_t)^\top(\nabla f(\x_t) - \nabla f(\x^*)) \leq \ell\|\x_t - \x^*\|\|G(\x_t)\|. 
\end{equation}
Using $(\x_t - \x^*)^\top G(\x_t) \leq \|\x_t - \x^*\|\|G(\x_t)\|$ and plugging Eq.~\eqref{inequality-inexact-local-grad-second} and Eq.~\eqref{inequality-inexact-local-grad-third} to Eq.~\eqref{inequality-inexact-local-grad-first} yields that 
\begin{equation*}
\|G(\x_t)\|^2/\ell + m\|\x_t - \x^*\|^2 \leq 2\|\x_t - \x^*\|\|G(\x_t)\|. 
\end{equation*}
which implies the desired result. 
\end{proof}
Let $\bar{\x}_{t+1}$ be defined by an exact proximal Newton step: $\bar{\x}_{t+1} = \argmin\{r(\z) + (\z-\x_t)^\top\nabla f(\x_t) + \frac{1}{2}(\z-\x_t)^\top H_{t'}(\z-\x_t)\}$ where $t' \ \text{mod} \ n = 0$ and $0 \leq t - t' \leq n-1$. 
\begin{lemma}\label{Lemma:inexact-local-resd}
Under Assumption~\ref{Assumption:objective-convex-main}-\ref{Assumption:regular-main} and let $\{\x_t\}_{t\geq 0}$ be generated by Algorithm~\ref{Algorithm:IPSA} with $\|\x_j - \x^*\| \leq \delta$ for all $j \leq t$. Then $\|\x_{t+1} - \bar{\x}_{t+1}\| \leq (2/m)\|\widehat{G}(\x_{t+1}, \x_t, H_{t'})\|$. 
\end{lemma}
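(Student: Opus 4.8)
The plan is to use the fact that the exact step $\bar{\x}_{t+1}$ is pinned down by a first-order optimality condition, while the residual $\widehat{G}(\x_{t+1},\x_t,H_{t'})$ encodes a second, nearby subgradient inclusion, and then to play these two inclusions against each other through the monotonicity of $\partial r$ (which is available from Assumption~\ref{Assumption:objective-convex-main}) together with the spectral bounds on $H_{t'}$.

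First I would record the optimality condition for the exact proximal Newton step: minimizing the (strongly convex) subproblem defining $\bar{\x}_{t+1}$ gives
\begin{equation*}
-\nabla f(\x_t) - H_{t'}(\bar{\x}_{t+1} - \x_t) \in \partial r(\bar{\x}_{t+1}).
\end{equation*}
Next I would unfold the definition of the composite-gradient residual. Setting $\z = \x_{t+1} - \widehat{G}(\x_{t+1},\x_t,H_{t'})/\ell$, so that $\z = \prox_{r/\ell}(\x_{t+1} - (\nabla f(\x_t)+H_{t'}(\x_{t+1}-\x_t))/\ell)$, the optimality condition of this proximal mapping yields the second inclusion
\begin{equation*}
\widehat{G}(\x_{t+1},\x_t,H_{t'}) - \nabla f(\x_t) - H_{t'}(\x_{t+1}-\x_t) \in \partial r(\z).
\end{equation*}

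Then I would apply monotonicity of $\partial r$ to these two inclusions at the base points $\z$ and $\bar{\x}_{t+1}$. Writing $\sd = \x_{t+1}-\bar{\x}_{t+1}$ and $\g = \widehat{G}(\x_{t+1},\x_t,H_{t'})$, the difference of the two subgradients is $\g - H_{t'}\sd$ and the difference of the base points is $\z - \bar{\x}_{t+1} = \sd - \g/\ell$, so monotonicity gives $(\sd - \g/\ell)^\top(\g - H_{t'}\sd) \geq 0$. Expanding and isolating the quadratic term produces
\begin{equation*}
\sd^\top H_{t'}\sd \leq \sd^\top\g + \tfrac{1}{\ell}\g^\top H_{t'}\sd - \tfrac{1}{\ell}\|\g\|^2.
\end{equation*}
To finish, I would lower-bound the left side using $H_{t'} = \nabla^2 f(\x_{t'}) \succeq mI$, which holds because $\|\x_{t'}-\x^*\|\leq\delta$ together with Assumption~\ref{Assumption:regular-main} and continuity of the Hessian; upper-bound the cross term via $\|H_{t'}\|\leq\ell$ (from the $\ell$-gradient Lipschitz condition in Assumption~\ref{Assumption:smooth-main}) and Cauchy--Schwarz; apply Cauchy--Schwarz to $\sd^\top\g$; and discard the nonpositive term $-\|\g\|^2/\ell$. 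This yields $m\|\sd\|^2 \leq 2\|\sd\|\|\g\|$, hence the claim $\|\x_{t+1}-\bar{\x}_{t+1}\|\leq (2/m)\|\widehat{G}(\x_{t+1},\x_t,H_{t'})\|$.

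The main obstacle I anticipate is the second step: correctly translating the definition of $\widehat{G}$ into a subgradient inclusion at the shifted point $\z = \x_{t+1}-\g/\ell$ rather than at $\x_{t+1}$ itself, and then keeping the bookkeeping of the cross term tight enough that the factor $1/\ell$ exactly absorbs $\|H_{t'}\|\leq\ell$ to leave the constant $2$. (One could alternatively view the map $\z\mapsto\prox_{r/\ell}(\z-(\nabla f(\x_t)+H_{t'}(\z-\x_t))/\ell)$ as a contraction with modulus $1-m/\ell$ whose fixed point is $\bar{\x}_{t+1}$; the standard fixed-point error bound then delivers the same inequality, in fact with the sharper constant $1/m$.)
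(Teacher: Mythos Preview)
Your proposal is correct and follows essentially the same route as the paper: both set up the two subgradient inclusions (at $\bar{\x}_{t+1}$ from the exact step and at $\x_{t+1}-\widehat{G}/\ell$ from the prox defining $\widehat{G}$), apply monotonicity of $\partial r$, expand to isolate $\sd^\top H_{t'}\sd$, and then use $mI\preceq H_{t'}\preceq \ell I$ together with Cauchy--Schwarz to arrive at $m\|\sd\|^2\leq 2\|\sd\|\|\g\|$. The only cosmetic difference is that the paper writes out the intermediate inequality before dropping the $-\|\g\|^2/\ell$ term, whereas you discard it immediately; the argument is otherwise identical.
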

\begin{proof}
By the definition of $\widehat{G}(\cdot)$, we have
\begin{eqnarray*}
& & \widehat{G}(\x_{t+1}, \x_t, H_{t'}) - \nabla f(\x_t) - H_{t'}(\x_{t+1} - \x_t) \\
& \in & \partial r(\x_{t+1} - \widehat{G}(\x_{t+1}, \x_t, H_{t'})/\ell).
\end{eqnarray*}
By the definition of $\bar{\x}_{t+1}$, we have $- \nabla f(\x_t) - H_{t'}(\bar{\x}_{t+1} - \x_t) \in \partial r(\bar{\x}_{t+1})$. By the convexity of $r$, we have 
\begin{eqnarray*}
0 & \leq & (\x_{t+1} - \bar{\x}_{t+1} - \widehat{G}(\x_{t+1}, \x_t, H_{t'})/\ell)^\top(\widehat{G}(\x_{t+1}, \x_t, H_{t'}) \\ 
& & - H_{t'}(\x_{t+1} - \bar{\x}_{t+1})). 
\end{eqnarray*}
Equivalently, we have
{\small \begin{align*}
(\x_{t+1} - \bar{\x}_{t+1})^\top H_{t'}(\x_{t+1} - \bar{\x}_{t+1}) & \leq (\x_{t+1} - \bar{\x}_{t+1})^\top\widehat{G}(\x_{t+1}, \x_t, H_{t'}) \\ 
& \hspace*{-12em} + \widehat{G}(\x_{t+1}, \x_t, H_{t'})^\top H_{t'}(\x_{t+1} - \bar{\x}_{t+1})/\ell - \|\widehat{G}(\x_{t+1}, \x_t, H_{t'})\|^2/\ell. 
\end{align*}}
Since $\|\x_j - \x^*\| \leq \delta$ for all $j \leq t$, Assumption~\ref{Assumption:regular-main} implies that $(\x_{t+1} - \bar{\x}_{t+1})^\top H_{t'}(\x_{t+1} - \bar{\x}_{t+1}) \geq m\|\x_{t+1} - \bar{\x}_{t+1}\|^2$. Since $f$ is $\ell$-gradient Lipschitz, we have $H_{t'} \preceq \ell I$ and 
{\small \begin{equation*}
\widehat{G}(\x_{t+1}, \x_t, H_{t'})^\top H_{t'}(\x_{t+1} - \bar{\x}_{t+1}) \leq \ell\|\x_{t+1} - \bar{\x}_{t+1}\|\|\widehat{G}(\x_{t+1}, \x_t, H_{t'})\|.
\end{equation*}}
Putting these pieces with $(\x_{t+1} - \bar{\x}_{t+1})^\top\widehat{G}(\x_{t+1}, \x_t, H_{t'}) \leq \|\x_{t+1} - \bar{\x}_{t+1}\|\|\widehat{G}(\x_{t+1}, \x_t, H_{t'})\|$ yields the desired result. 
\end{proof}
For the ease of presentation, we denote $\theta_1=(6\rho + 2m\ell)/m$, $\theta_2 = \sqrt[\gamma-1]{(3\rho/2m)^{\gamma-1} + (\ell/2)^{\gamma-1}}$ and $\theta_3 = 3\rho/2m + \ell/2$. 
\begin{lemma}\label{Lemma:inexact-local-stay}
Under Assumption~\ref{Assumption:objective-convex-main}-\ref{Assumption:regular-main} and let $\{\x_t\}_{t\geq 0}$ be generated by Algorithm~\ref{Algorithm:IPSA} with $0 < \eta_t \leq \bar{\eta} < m/16\ell$ and $\|\x_0 - \x^*\| \leq \min\{\delta, 1/\theta_1\}$. Then $\|\x_t - \x^*\| \leq \min\{\delta, 1/\theta_1\}$ and $\|\x_{t+1} - \x^*\| \leq (1/2)\|\x_t - \x^*\|$ for all $t \geq 0$. 
\end{lemma}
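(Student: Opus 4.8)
The plan is to prove the contraction estimate $\|\x_{t+1} - \x^*\| \leq (1/2)\|\x_t - \x^*\|$ by induction on $t$; the bound $\|\x_t - \x^*\| \leq \min\{\delta, 1/\theta_1\}$ then follows at once, since each step at least halves the distance to $\x^*$ and the initial point already lies in the ball. In the inductive step I would assume $\|\x_j - \x^*\| \leq \min\{\delta, 1/\theta_1\}$ for all $j \leq t$; in particular Assumption~\ref{Assumption:regular-main} gives $\nabla^2 f(\x_{t'}) \succeq mI$ at the base point $t'$ (with $t' \ \text{mod} \ n = 0$ and $0 \leq t - t' \leq n-1$), so that both Lemma~\ref{Lemma:inexact-local-grad} and Lemma~\ref{Lemma:inexact-local-resd} apply.

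The main idea is to compare the inexact iterate $\x_{t+1}$ with the exact proximal Newton step $\bar{\x}_{t+1}$ through the triangle inequality $\|\x_{t+1} - \x^*\| \leq \|\x_{t+1} - \bar{\x}_{t+1}\| + \|\bar{\x}_{t+1} - \x^*\|$, and to bound each term by $(1/4)\|\x_t - \x^*\|$. For the inexactness term, Lemma~\ref{Lemma:inexact-local-resd} together with the stopping criterion~\eqref{criterion:stop-rule} gives $\|\x_{t+1} - \bar{\x}_{t+1}\| \leq (2/m)\eta_t\|G(\x_t)\|^\gamma$. The crucial observation is that $\|G(\x_t)\| \leq 2\ell\|\x_t - \x^*\| \leq 2\ell/\theta_1 = 2m\ell/(6\rho + 2m\ell) < 1$ by Lemma~\ref{Lemma:inexact-local-grad} and the definition of $\theta_1$, so $\|G(\x_t)\|^\gamma \leq \|G(\x_t)\|$ for every $\gamma \geq 1$; combined with $\eta_t \leq \bar{\eta} < m/16\ell$ this yields $\|\x_{t+1} - \bar{\x}_{t+1}\| \leq (4\bar{\eta}\ell/m)\|\x_t - \x^*\| < (1/4)\|\x_t - \x^*\|$. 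This single chain handles all three regimes of $\gamma$ at once, which is why the $1/2$-contraction is uniform in $\gamma$.

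For the exact term, since Algorithm~\ref{Algorithm:IPSA} uses the true Hessian ($\nabla^2 f(\x_{t'}) = RR^\top$), the point $\bar{\x}_{t+1}$ is exactly a Shamanskii step based at $\x_{t'}$, so the estimate~\eqref{inequality-exact-local-third} derived in the proof of Theorem~\ref{Theorem:local-exact-main} applies verbatim: $\|\bar{\x}_{t+1} - \x^*\| \leq (\rho/m)\|\x_t - \x^*\|(\|\x_{t'} - \x^*\| + \|\x_t - \x^*\|/2)$. Bounding $\|\x_{t'} - \x^*\|, \|\x_t - \x^*\| \leq 1/\theta_1$ and using $\theta_1 = (6\rho + 2m\ell)/m \geq 6\rho/m$ then gives $\|\bar{\x}_{t+1} - \x^*\| \leq (3\rho/2m\theta_1)\|\x_t - \x^*\| \leq (1/4)\|\x_t - \x^*\|$, and adding the two bounds closes the induction.

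The only delicate point is calibrating the constants so that both halves fall below $1/4$ without reference to $\gamma$, which is precisely what the value $\theta_1 = (6\rho + 2m\ell)/m$ is engineered to achieve: the $6\rho/m$ part controls the exact Shamanskii term, while the extra $2\ell$ forces $\|G(\x_t)\| < 1$ so that $\|G(\x_t)\|^\gamma \leq \|G(\x_t)\|$ can be invoked, and the threshold $\bar{\eta} < m/16\ell$ then pins the inexactness contribution below $1/4$. Everything else reduces to the triangle inequality and the two preparatory lemmas.
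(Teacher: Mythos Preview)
Your proposal is correct and follows essentially the same route as the paper's proof: induction on $t$, the triangle decomposition $\|\x_{t+1}-\x^*\|\le\|\x_{t+1}-\bar\x_{t+1}\|+\|\bar\x_{t+1}-\x^*\|$, Lemma~\ref{Lemma:inexact-local-resd} plus~\eqref{criterion:stop-rule} and Lemma~\ref{Lemma:inexact-local-grad} for the inexactness term, Eq.~\eqref{inequality-exact-local-third} for the exact Shamanskii term, and the observation $2\ell\|\x_t-\x^*\|\le 1$ (equivalently $\|G(\x_t)\|<1$) to reduce $\gamma\ge 1$ to $\gamma=1$. The only cosmetic difference is that the paper first writes the combined bound~\eqref{inequality-inexact-local-stay-first} and then simplifies to~\eqref{inequality-inexact-local-stay-second}, whereas you bound each half by $1/4$ separately before adding; the constants and the role of $\theta_1$ and $\bar\eta<m/16\ell$ are identical.
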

\begin{proof}
We prove using the induction. It is trivial when $t=0$. Assume $\|\x_j - \x^*\| \leq \min\{\delta, 1/\theta_1\}$ for all $j \leq t$, we hope to show $\|\x_{t+1} - \x^*\| \leq \min\{\delta, 1/\theta_1\}$. 

Since $\bar{\x}_{t+1}$ is achieved by an exact proximal Newton step, it satisfies Eq.~\eqref{inequality-exact-local-third} such that 
\begin{equation*}
\|\bar{\x}_{t+1} - \x^*\| \leq (\rho/m)\|\x_t - \x^*\|(\|\x_{t'} - \x^*\| + \|\x_t - \x^*\|/2). 
\end{equation*}
Using Lemma~\ref{Lemma:inexact-local-grad},~\ref{Lemma:inexact-local-resd} and the stopping criterion~\eqref{criterion:stop-rule}, we have
\begin{align*}
\|\x_{t+1} - \bar{\x}_{t+1}\| & \leq (2/m)\|\widehat{G}(\x_{t+1}, \x_t, H_{t'})\| \leq (2\eta_t/m)\|G(\x_t)\|^\gamma \\
& \leq (2\eta_t/m)(2\ell\|\x_t - \x^*\|)^\gamma.   
\end{align*}
Putting these pieces together yields that 
\begin{align}\label{inequality-inexact-local-stay-first}
\|\x_{t+1} - \x^*\| \leq & (\rho/m)\|\x_t - \x^*\|(\|\x_{t'} - \x^*\| + \|\x_t - \x^*\|/2) \nonumber \\
& + (2\eta_t/m)(2\ell\|\x_t - \x^*\|)^\gamma.
\end{align}
Using the induction that $\|\x_j - \x^*\| \leq \min\{\delta, 1/\theta_1\}$ for all $j \leq t$ and the definition of $\theta_1$, we have $\|\x_{t'} - \x^*\| + \|\x_t - \x^*\|/2 \leq m/4\rho$ and $2\ell\|\x_t - \x^*\| \leq 1$. Putting these pieces together with Eq.~\eqref{inequality-inexact-local-stay-first} and $\gamma \geq 1$ yields that 
\begin{equation}\label{inequality-inexact-local-stay-second}
\|\x_{t+1} - \x^*\| \leq (1/4 + 4\ell\eta_t/m)\|\x_t - \x^*\|. 
\end{equation}
In addition, $0 < \eta_t \leq \bar{\eta} < m/16\ell$. Thus, we have $\|\x_{t+1} - \x^*\| \leq \|\x_t - \x^*\| \leq \min\{\delta, 1/\theta_1\}$. Then we proceed to show that $\|\x_{t+1} - \x^*\| \leq (1/2)\|\x_t - \x^*\|$ for all $t \geq 0$. Indeed, since $\|\x_t - \x^*\| \leq \min\{\delta, 1/\theta_1\}$ for all $t \geq 0$, the desired result follows from Eq.~\eqref{inequality-inexact-local-stay-second} and $0 < \eta_t \leq \bar{\eta} < m/16\ell$. 
\end{proof}
\textbf{Proof of Theorem~\ref{Theorem:local-inexact-main}:} First, we show $r_t \leq 1$ for all $t \geq 0$. Indeed, we have $\x_0$ satisfies Eq.~\eqref{condition:local-inexact-main}. Thus, $\|\x_0 - \x^*\| \leq \min\{\delta, 1/\theta_1\}$ for all $\gamma \geq 1$. In addition, $0 < \eta_t \leq \bar{\eta} < m/16\ell$. Lemma~\ref{Lemma:inexact-local-stay} implies $\|\x_t - \x^*\| \leq \|\x_0 - \x^*\|$ for all $t \geq 0$. By the definition of $r_t$ (cf. Eq~\eqref{def:local-inexact-main}), we have  
\begin{equation*}
r_t \ \leq \ \left\{\begin{array}{cl}
\theta_1\|\x_0 - \x^*\|, & \gamma = 1, \\
\theta_2\|\x_0 - \x^*\|, & \gamma \in (1, 2), \\
\theta_3\|\x_0 - \x^*\|, & \gamma > 2. 
\end{array}\right. 
\end{equation*}
Therefore, we conclude that $r_t \leq 1$ for all $t \geq 0$. Then we prove the remaining parts case by case.  

\textbf{Case I.} If $\gamma = 1$ and $\eta_t \nrightarrow 0$, we have $r_t = \theta_1\|\x_t - \x^*\|$. Lemma~\ref{Lemma:inexact-local-stay} implies that $r_{t+1} \leq r_t/2$ for all $t \geq 0$. Furthermore, Eq.~\eqref{inequality-inexact-local-stay-first} with $\gamma=1$ implies that 
\begin{equation*}
r_{t+1}/r_t \leq (\rho/m) (\|\x_{t'} - \x^*\| + \|\x_t - \x^*\|/2) + 4\eta_t\ell/m. 
\end{equation*}
Note that $t' \ \text{mod} \ n = 0$ and $0 \leq t - t' \leq n-1$, we have $\|\x_{t'} - \x^*\| \rightarrow 0$ and $\|\x_t - \x^*\| \rightarrow 0$ as $t \rightarrow +\infty$. If $\eta_t \rightarrow 0$, we have $r_{t+1}/r_t \rightarrow 0$ and hence $r_{t+1} = o(r_t)$. 

\textbf{Case II.} If $\gamma \in (1, 2)$ and $\eta_t \nrightarrow 0$, we have $r_t = \theta_2\|\x_t - \x^*\|$. Lemma~\ref{Lemma:inexact-local-stay} implies $\|\x_t - \x^*\| \leq \min\{\delta, 1/\theta_1\}$ for all $t \geq 0$. Recall Eq.~\eqref{inequality-inexact-local-stay-first} as follows, 
\begin{align*}
\|\x_{t+1} - \x^*\| \leq & (\rho/m)\|\x_t - \x^*\|(\|\x_{t'} - \x^*\| + \|\x_t - \x^*\|/2) \\
& + (2^{1+\gamma}\ell^\gamma\eta_t/m)\|\x_t - \x^*\|^\gamma.
\end{align*}
Since $\gamma \in (1, 2)$ and $0 < \eta_t \leq \bar{\eta} < m/16\ell$, we have
\begin{align}\label{inequality-inexact-local-first}
\|\x_{t+1} - \x^*\| \leq & (\rho/m)\|\x_t - \x^*\|(\|\x_{t'} - \x^*\| + \|\x_t - \x^*\|/2) \nonumber \\
& + (\ell/2)^{\gamma-1}\|\x_t - \x^*\|^\gamma.
\end{align}
Since $t' \ \text{mod} \ n = 0$ and $0 \leq t - t' \leq n-1$, we have $t' \leq t$. Lemma~\ref{Lemma:inexact-local-stay} implies that $\|\x_t - \x^*\| \leq \|\x_{t'} - \x^*\|$. Putting these pieces with Eq.~\eqref{inequality-inexact-local-first} yields that 
\begin{eqnarray*}
\|\x_{t+1} - \x^*\| & \leq & (3\rho/2m)\|\x_t - \x^*\|\|\x_{t'} - \x^*\| \\
& & \hspace*{-2em} + (\ell/2)^{\gamma-1}\|\x_t - \x^*\|^{\gamma-1}\|\x_{t'} - \x^*\|.
\end{eqnarray*}
Since $\|\x_t - \x^*\| \leq \min\{\delta, 1/\theta_1\} < 2m/3\rho$ and $\gamma \in (1, 2)$, we have $(3\rho/2m)\|\x_t - \x^*\| \leq ((3\rho/2m)\|\x_t - \x^*\|)^{\gamma-1}$. Putting these pieces together with the definition of $\theta_2$ yields that
\begin{equation*}
\|\x_{t+1} - \x^*\| \leq \theta_2^{\gamma-1}\|\x_t - \x^*\|^{\gamma-1}\|\x_{t'} - \x^*\|.  
\end{equation*}
By the definition of $r_t$, we have $r_{t+1} \leq r_t r_{t'}^{\gamma-1}$. It suffices to prove the desired result using induction. Indeed, it holds trivially when $t=0$. Let $t = nk+l$ be an integer for some $k \geq 0$ and $0 \leq l \leq n-1$ such that $r_t \leq r_0^{(l(\gamma-1)+1)(n(\gamma-1)+1)^k}$. We consider two cases: (i) if $t = nk$, then $t' = t$ and $r_{t+1} \leq r_t r_{t'}^{\gamma-1} = r_t^\gamma \leq r_0^{\gamma(n(\gamma-1)+1)^k}$; (ii) if $t = nk+l$ for $0 < l \leq n-1$, then $t' = nk$ and $r_{t+1} \leq r_t r_{t'}^{\gamma-1} = r_t r_{nk}^{\gamma-1} \leq r_0^{((l+1)(\gamma-1)+1)(n(\gamma-1)+1)^k}$. This completes the inductive argument. If $\eta_t \rightarrow 0$, we have $r_{t+1}/(r_t r_{t'}^{\gamma-1}) \rightarrow 0$. Using the same argument as before, we have $r_t/r_0^{(l(\gamma-1)+1)(n(\gamma-1)+1)^k} \rightarrow 0$ for $t = nk+l$ where $k \geq 0$ and $0 \leq l \leq n-1$ are integers.

\textbf{Case III.} If $\gamma > 2$ and $\eta_t \nrightarrow 0$, we have $r_t = \theta_3\|\x_t - \x^*\|$. Lemma~\ref{Lemma:inexact-local-stay} implies $\|\x_t - \x^*\| \leq \min\{\delta, 1/\theta_1\}$ for all $t \geq 0$. By the definition of $\theta_1$, we have $\|\x_t - \x^*\| \leq 1/2\ell$. Since $\gamma > 2$, we have $(2\ell\|\x_t - \x^*\|)^\gamma \leq 4\ell^2\|\x_t - \x^*\|^2$. Putting these pieces with Eq.~\eqref{inequality-inexact-local-stay-first} and $0 < \eta_t \leq \bar{\eta} < m/16\ell$ yields that 
\begin{align}\label{inequality-inexact-local-second}
\|\x_{t+1} - \x^*\| \leq & (\rho/m)\|\x_t - \x^*\|(\|\x_{t'} - \x^*\| + \|\x_t - \x^*\|/2) \nonumber \\
& + (\ell/2)\|\x_t - \x^*\|^2.
\end{align}
Since $t' \ \text{mod} \ n = 0$ and $0 \leq t - t' \leq n-1$, we have $t' \leq t$. Lemma~\ref{Lemma:inexact-local-stay} implies that $\|\x_t - \x^*\| \leq \|\x_{t'} - \x^*\|$. Putting these pieces with Eq.~\eqref{inequality-inexact-local-second} yields that $\|\x_{t+1} - \x^*\| \leq \theta_3\|\x_t - \x^*\|\|\x_{t'} - \x^*\|$. By the definition of $r_t$, we have $r_{t+1} \leq r_t r_{t'}^{\gamma-1}$. Using the same induction argument in the proof of Theorem~\ref{Theorem:local-exact-main}, we conclude the desired result.

\subsection{Proof of Theorem~\ref{Theorem:inexact-step-main}}
Since $\x_0$ satisfies Eq.~\eqref{condition:local-inexact-main} and $0 < \eta_t \leq \bar{\eta} < m/16\ell$, Lemma~\ref{Lemma:inexact-local-stay} is valid and implies $\|\x_t - \x^*\| \leq \min\{\delta, 1/\theta_1\}$ and $\|\x_{t+1} - \x^*\| \leq (1/2)\|\x_t - \x^*\|$ for all $t \geq 0$. From Lemma~\ref{Lemma:inexact-local-grad}, we have
\begin{equation}\label{inequality-step-size-first}
\|G(\x_{t-1})\| \geq (m/2)\|\x_{t-1} - \x^*\|. 
\end{equation}
By the definition of $\widehat{G}$ and $G$, and the nonexpansiveness of a proximal operator $\prox_{r/\ell}$~\cite{Parikh-2014-Proximal}, we have
{\small \begin{align*}
\|\widehat{G}(\x_t, \x_{t-1}, H) - G(\x_t)\| & \leq \|\nabla f(\x_t) - \nabla f(\x_{t-1}) - H(\x_t - \x_{t-1})\| \\
& \hspace*{-6em} \leq \|\nabla f(\x_t) - \nabla f(\x_{t-1}) - \nabla^2 f(\x_{t-1})\left(\x_t - \x_{t-1}\right)\| \\
& \hspace*{-6em} \quad + \|\nabla^2 f(\x_{t-1}) - H\|\|\x_t - \x_{t-1}\|. 
\end{align*}}
Since $H$ is used for updating $\x_{t+1}$, then $H = \nabla^2 f(\x_{t'})$ where $t' \ \text{mod} \ n = 0$ and $0 \leq t - t' \leq n-1$. In addition, $f$ is $\rho$-Hessian Lipschitz, we have $\|\nabla^2 f(\x_{t-1}) - H\| \leq \rho\|\x_{t-1} - \x_{t'}\|$ and 
{\small \begin{equation*}
\|\nabla f(\x_t) - \nabla f(\x_{t-1}) - \nabla^2 f(\x_{t-1})(\x_t - \x_{t-1})\| \leq (\rho/2)\|\x_t - \x_{t-1}\|^2. 
\end{equation*}}
Putting these pieces together yields that  
\begin{align}\label{inequality-step-size-second}
\|\widehat{G}(\x_t, \x_{t-1}, H) - G(\x_t)\| \leq & (\rho/2)\|\x_t - \x_{t-1}\|^2 \\ 
& \hspace*{-6em} + \rho\|\x_{t-1} - \x_{t'}\|\|\x_t - \x_{t-1}\|. \nonumber 
\end{align}
Combining Eq.~\eqref{inequality-step-size-first} and Eq.~\eqref{inequality-step-size-second} yields that
\begin{equation*}
\eta_t \leq \frac{\rho\|\x_t - \x_{t-1}\|^2 + 2\rho\|\x_{t-1} - \x_{t'}\|\|\x_t - \x_{t-1}\|}{m\|\x_{t-1} - \x^*\|}. 
\end{equation*}
Since $\|\x_{t+1} - \x^*\| \leq (1/2)\|\x_t - \x^*\|$ for all $t \geq 0$, we have $\|\x_t - \x_{t-1}\| \leq (3/2)\|\x_{t-1} - \x^*\|$. Therefore, we conclude that 
\begin{equation*}
\eta_t \leq \frac{3\rho\|\x_t - \x_{t-1}\| + 6\rho\|\x_{t-1} - \x_{t'}\|}{2m}. 
\end{equation*}
Since $t' \ \text{mod} \ n = 0$ and $0 \leq t - t' \leq n-1$, we have $t' \rightarrow +\infty$ as $t \rightarrow +\infty$. This implies that $\x_{t'}, \x_t \rightarrow \x^*$ as $t \rightarrow +\infty$. Therefore, we conclude that $\eta_t \rightarrow 0$ as $t \rightarrow +\infty$. 
 


\bibliographystyle{plain}
\bibliography{IEEEabrv,ref}

\end{document}